\documentclass[final]{aupl}

\usepackage{
amsfonts,
latexsym,
amssymb,
enumerate,
verbatim,
mathrsfs,
}

\newcommand{\labbel}{\label} \newcommand{\bibbitem}{\bibitem}

\newtheorem{theorem}{Theorem}[section]
\newtheorem{lemma}[theorem]{Lemma}

\newtheorem{proposition}[theorem]{Proposition} 
 
\newtheorem{corollary}[theorem]{Corollary}

\newtheorem*{theorem*}{Theorem}
\newtheorem*{corollary*}{Corollary}

\theoremstyle{definition}

\newtheorem{problem}[theorem]{Problem}

\theoremstyle{remark}
\newtheorem{remark}[theorem]{Remark}

\newcommand{\alt}[1]{^{\circ #1}} 

\newcommand{\astt}{^{\circledast}}

 \allowbreak

 \allowdisplaybreaks[1]

\hyphenation{scien-ti-fi-ca}

\begin{document}
 
\title{Unions of admissible relations}

\author{Paolo Lipparini} 
\address{Dipartimento Unito di Matematica\\Viale della  Ricerca
 Scientifica\\Universit\`a di Roma ``Tor Vergata'' 
\\I-00133 ROME ITALY}
\urladdr{http://www.mat.uniroma2.it/\textasciitilde lipparin}
\email{lipparin@axp.mat.uniroma2.it}

\keywords{Congruence
distributive variety; (directed) J{\'o}nsson  terms;
tolerance; (unions of) reflexive and admissible relations}

\subjclass
[2010]
{08B10; 08B05}

\thanks{Work performed under the auspices of G.N.S.A.G.A. Work 
partially supported by PRIN 2012 ``Logica, Modelli e Insiemi''}

\begin{abstract}
We show that a variety $\mathcal V$ is congruence distributive if and only if 
there is some $h$ such that the inclusion
\begin{equation} \labbel{1}     
 \Theta \cap ( \sigma \circ \sigma ) \subseteq ( \Theta \cap  \sigma ) \circ
( \Theta \cap  \sigma )  \circ \dots \quad \text{ ($h$ factors)}  
  \end{equation}
holds in every algebra in $\mathcal V$, for every 
tolerance $\Theta$  and every
U-admissible relation
$\sigma$. 
By a \emph{U-admissible} 
relation we mean a binary relation which is
the  set-theoretical union 
of a set of reflexive and admissible relations.
For any fixed $h$, a Maltsev-type characterization is given for the
inclusion \eqref{1}.
It is an open problem whether \eqref{1} 
is still equivalent to congruence distributivity when $\Theta$ 
is assumed to be a $U$-admissible relation, rather than a tolerance.
In both cases many equivalent formulations for \eqref{1}
are presented. 
The results suggest that it might be interesting to study
the structure of the set of U-admissible relations on an algebra,
as well as identities dealing with such relations.
\end{abstract}

\maketitle

\section{Introduction} \labbel{intro}

Congruence identities have played an important role
in universal algebra right from the beginning.
The study of congruence permutable, distributive and modular varieties
has led to a plethora of significant techniques and results.
J{\'o}nsson \cite{cv}  is a good introduction to the classical results.
Research in the field is still very active;
recent, advanced and sophisticated results appear, for example, in 
Kearnes and Kiss \cite{KK}, where the reader can also find further  references.

 Already from  the earlier arguments it appeared evident
that tolerances and, more generally,
 admissible relations are  fundamental tools even
when the main focus are congruences.
See, e.g.,  Gumm \cite{G}, J{\'o}nsson  \cite[p. 370]{cv} or 
 Tschantz \cite{T}. In a form or another
and more or less explicitly, this aspect appears 
also in 
\cite{CH,JD,KK,contol,JDS,ricm,W},
just to limit ourselves to a few references.
Recall that a \emph{tolerance}
is a  reflexive, symmetric and admissible relation.

In \cite{ricm} we have found 
a particularly simple characterization of congruence modularity.
A variety $\mathcal V$ is congruence modular if and only if
there is some $k$ such that   the inclusion
\begin{equation}\labbel{2}     
 \Theta  (R \circ R) \subseteq ( \Theta  R) \alt k
  \end{equation}
holds in every algebra in $\mathcal V$, for every
  reflexive and admissible relation $R$  and 
every congruence (equivalently, every tolerance)
 $\Theta$.

Here and below juxtaposition denotes intersection and, for a binary relation $R$,
$R \alt h$  denotes
the relational composition
 $R \circ R \circ \dots$ with
$h$ factors, that is, with $h-1$ occurrences 
of $\circ$. 
The displayed formula in the abstract is  then written as
$\Theta( \sigma \circ \sigma ) \subseteq (\Theta \sigma) \alt h $. 
We shall also use the shorthand  $S \circ _m T$ to  denote
$ S \circ T \circ S \dots$
with $m$ factors.
Thus
$R \alt h$ is the same as $R \circ _h R$.

We say that an inclusion like \eqref{2} 
\emph{holds} in some variety $\mathcal V$ if
the inclusion holds 
when interpreted in the standard way
in the set of binary relations
on every algebra in $\mathcal V$.
 Similar expressions like
 ``$\mathcal V$ \emph{satisfies} an inclusion'' shall be used
with the same meaning.
Notice that, since an inclusion $A \subseteq B$ is equivalent to the identity
$A=AB$, we can equivalently---and usually shall---speak of \emph{identities}.
As a standard convention, we assume that 
juxtaposition ties more than any other binary operation symbol, namely,
an expression like $(\alpha \beta ) \circ (\alpha \gamma )$
will be simply written as 
$\alpha \beta \circ \alpha \gamma $. 
However, exponents bind more than anything else,
e.g.,  $\Theta R \alt h$ means
$\Theta (R \alt h)$.
If not explicitly stated otherwise,
$\alpha$, $\beta$ and $\gamma$ 
are variables for congruences,
$\Theta$ can be equivalently taken to be a variable for
congruences or tolerances and 
$R$, $S$ and $T$ are variables for reflexive and admissible relations.
All the binary relations considered in this note are assumed to be 
reflexive, hence sometimes we shall simply say 
\emph{admissible} in place of reflexive and admissible .

Identity \eqref{2} above  is related to various similar identities.
For example,
Werner \cite{W}  showed, among other, that a variety
$\mathcal V$ 
 is congruence permutable if and only if 
$\mathcal V$ satisfies
$R \circ R \subseteq R$.
This corresponds to the special case
$\Theta=1$ in \eqref{2}.  
Here $1$ or $1_\mathbf A$  denotes the largest congruence
on the algebra $\mathbf A$ under consideration. 
As another example, and an almost immediate consequence of results from
Kazda,  Kozik,  McKenzie and Moore
\cite{kkmm}, we observed in \cite[Proposition 3.1 and p.10]{JDS} 
 that a variety $\mathcal V$  is congruence distributive 
if and only if there is some $k$ such that $\mathcal V$ satisfies 
$\Theta (R \circ S) \subseteq \Theta R  \circ _{k}  \Theta S$.
Taking $R=S$ in this identity, we get \eqref{2} again. 
In this sense, identity \eqref{2} seems still another way
to see that congruence modularity is some kind of a combination
of congruence permutability with distributivity \cite{G}.
 
By the way, notice that congruence distributivity is equivalent also to 
$\alpha ( \beta \circ \gamma ) \subseteq \alpha \beta  \circ _{h} \alpha \gamma  $,
for some $h$ and for congruences $\alpha$, $\beta$,
$\gamma$, by a classical paper by J{\'o}nsson  \cite{JD}.  
On the other hand, both identity \eqref{2} and Werner's identity
$R \circ R = R$ become trivially true in every algebra,
if we let $R$ be a congruence rather than a reflexive and admissible relation.
This shows that many (but not all) relation identities become trivial when 
relations are replaced by congruences. In particular, considering 
relation identities for their own sake seems to give
 a new perspective to the subject.

Motivated by the above considerations and, in particular, 
by the characterization \eqref{2} 
of congruence modularity,
 we looked for
a characterization of congruence distributivity
by means of some expression of the form 
$X (Y \circ Y) \subseteq  something$, where
 in the factor on the left-hand side we are taking
the composition of some $Y$ \textit{with itself}.
At the beginning this looked only like an odd
curiosity; however, after a while, 
a rather clear and, at least in the author's opinion, interesting
picture emerged.

 We found that  congruence distributivity
is equivalent to equation \eqref{2}, for some $h$,  provided that
 $R$ there is taken to be the set-theoretical union of 
some family  of  admissible relations,
rather than an admissible relation.  
Equivalently, we can take $R$ there to be the 
set-theoretical union of two congruences.
Similar ideas are not completely new.
Unions of congruences have been used in 
a proof of J{\'o}nsson's characterization of congruence distributivity,
as presented in McKenzie,  McNulty and  Taylor \cite[Theorem 1.144]{alvin}.
Unions of congruences have been used also in 
  Kaarli and Pixley \cite[Lemma 1.1.12]{KP},
 giving a proof that
every algebra with a compatible near unanimity term
is congruence distributive.
The proof is credited to E. Fried. 

More characterizations of congruence distributivity
are possible in terms of  unions of admissible relations,  
 \emph{U-admissible} relations, for short.
For example, a variety $\mathcal V$ is congruence distributive 
if and only if $\mathcal V$ satisfies 
$(\Theta( \sigma \circ \sigma )) \astt 
 = ( \Theta  \sigma ) \astt $.
Here $ \astt$ denotes transitive closure;
$\sigma$, $\tau$ and $ \upsilon $
shall be used as variables for   
$U$-admissible relations.
Further characterizations shall be presented
in Corollary \ref{cor}.
In most cases, 
$\sigma$, $\tau$ and $ \upsilon $
can be equivalently interpreted
in other ways, for example, 
as unions of two congruences. In each result we shall
explicitly specify the possibilities for 
$\sigma$, $\tau$ and $ \upsilon $.
In any case, both in the present
discussion and in the rest of the paper,
the reader might always assume that $\sigma$, $\tau$ and $ \upsilon $
are $U$-admissible relations.

The question naturally arises
whether in the above formulae the tolerance  $\Theta$,
too, can be equivalently taken to be  a $U$-admissible relation,
still getting a condition equivalent to congruence
distributivity.
This seems to be the main  problem left open by the present note.
On the positive side, we show 
that $\Theta$ can be taken to be a 
$U$-admissible relation in some special cases,
i.e., in varieties with a majority term and
in a $4$-distributive variety introduced by 
Baker \cite{B}. 
This leads to some new characterizations
of arithmetical varieties and of varieties with a majority term.

Dealing with  general results, we show, among other,
that a variety $\mathcal V$ satisfies 
 $\sigma( \tau \circ \upsilon) \subseteq \sigma \tau \circ_h \sigma \upsilon$,
for some $h$, if and only if  
$\mathcal V$ satisfies 
 $\sigma \astt  \tau  \astt  = (\sigma \tau ) \astt  $.
When expressed in terms of tolerances, 
the latter identity is equivalent to congruence 
modularity and has found many applications.
See, e.g.,  Cz\'edli,  Horv\'ath and Lipparini
\cite{CHL}.
We also show that we get equivalent conditions if we consider
the identity  $\sigma( \tau \circ \upsilon) \subseteq \sigma \tau \circ_h \sigma \upsilon$
for $U$-admissible relations
and for admissible relations.
Here the value of $h$ remains the same in both cases.
This is quite surprising; in this paper we shall find many equivalences
of this kind,  starting from Theorem \ref{cdist} below,
but usually the parameters are not constant. 
 
Finally, for any fixed $h$, we shall find a characterization of the 
identity $\Theta( \sigma \circ \sigma ) \subseteq (\Theta \sigma) \alt h $.
 A variety satisfies such an identity for $U$-admissible relations
if and only if there is some function 
$f: \{0, 1,  \dots, h-1 \} \to \{ 1,2\}  $
such that   
$\mathcal V$ satisfies the identity
$\alpha( R_1 \circ R_2 ) \subseteq \alpha  R _{f(0)} \circ 
\alpha  R _{f(1)} \circ \dots \circ  \alpha  R _{f(h-1)}  $
for admissible relations.
This shows that 
the 
identity $\Theta( \sigma \circ \sigma ) \subseteq (\Theta \sigma) \alt h $
is equivalent to a finite union of strong Maltsev classes. It suggests
that the study of relation identities satisfied in congruence distributive varieties
might provide a finer classification of 
such varieties, in comparison with the study of congruence identities
alone.

In conclusion, the equivalences we have found and the 
connections with other kinds of identities suggest 
that the study of $U$-admissible relations 
 has  an intrinsic interest and can be pursued further
in the case of congruence distributive varieties
and probably even in more general contexts.

\section{Congruence distributivity is equivalent
to the identity $ \Theta ( \sigma \circ \sigma ) \subseteq ( \Theta  \sigma ) \alt h$,
for some $h$} \labbel{main}

Recall  that a  binary relation $\sigma$ 
on some algebra $\mathbf A$ is 
\emph{U-admissible} if $\sigma$  
 can be expressed
as the set-theoretical union of some nonempty set of
reflexive and admissible relations on $\mathbf A$ 
(we are not assuming $\sigma$  
itself to be admissible!).
A relation  is 
\emph{U$_2$-admissible} if it  can be expressed
as the  union of two
reflexive and admissible relations.
Let
 $\mathbf F _{ \mathcal V } ( 3) $
  denote the free algebra  in $\mathcal V$ generated by 3 
elements.

\begin{theorem} \labbel{cdist} 
For every variety $\mathcal V$, the following conditions are equivalent.
 Each   condition
holds for $\mathcal V$ if and only if 
it holds for $\mathbf F _{ \mathcal V } ( 3) $.

  \begin{enumerate}[(1)]
    \item 
$\mathcal V$ is congruence distributive;
\item
for some $h$, $\mathcal V$ 
satisfies the identity
$ \Theta ( \sigma \circ \sigma ) \subseteq ( \Theta  \sigma ) \alt h$,
for every tolerance  $\Theta$   and every
U-admissible relation $\sigma$. 
\item
for some $k$, $\mathcal V$ 
satisfies the identity 
$ \alpha  ( \sigma \circ \sigma ) \subseteq ( \alpha   \sigma ) \alt k $,
for every congruence  $\alpha$    and 
every  binary relation $\sigma$ expressible
as the set-theoretical union of two congruences.
  \end{enumerate}
\end{theorem}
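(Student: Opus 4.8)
The plan is to prove the cycle of implications $(1) \Rightarrow (2) \Rightarrow (3) \Rightarrow (1)$, together with the reduction to $\mathbf F_{\mathcal V}(3)$. The implication $(2) \Rightarrow (3)$ is essentially immediate: a union of two congruences is in particular a $U_2$-admissible relation, hence $U$-admissible, and a congruence is in particular a tolerance; so condition (3) is the specialization of condition (2) obtained by restricting the ranges of $\Theta$ and $\sigma$. (One should note that the parameter may change, $h \leadsto k$, but that is harmless since both are merely asserted to exist.)

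For $(3) \Rightarrow (1)$, I would deduce congruence distributivity from Jónsson's classical characterization: it suffices to produce Jónsson terms, equivalently to verify $\alpha(\beta \circ \gamma) \subseteq \alpha\beta \circ_h \alpha\gamma$ for congruences. Given congruences $\beta, \gamma$, set $\sigma = \beta \cup \gamma$; this is a union of two congruences, so (3) applies with $\alpha$ a congruence. The point is that $\beta \circ \gamma \subseteq \sigma \circ \sigma$, so $\alpha(\beta\circ\gamma) \subseteq \alpha(\sigma\circ\sigma) \subseteq (\alpha\sigma)\alt k$. It remains to absorb $\alpha\sigma = \alpha\beta \cup \alpha\gamma$ back into an alternating composition of $\alpha\beta$ and $\alpha\gamma$; since each factor $\alpha\sigma$ is contained in $\alpha\beta \circ \alpha\gamma$ (using reflexivity of the congruences), one gets $\alpha(\beta\circ\gamma) \subseteq \alpha\beta \circ_{2k} \alpha\gamma$ after a routine rewriting, which is exactly the Jónsson condition. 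A symmetric or standard manipulation handles the starting end of the chain.

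The substantive direction is $(1) \Rightarrow (2)$, and I expect the main obstacle to lie here — specifically, in controlling an \emph{arbitrary} union $\sigma = \bigcup_{i} R_i$ of reflexive admissible relations rather than a single one. Starting from congruence distributivity, fix directed Jónsson terms (available by Kazda–Kozik–McKenzie–Moore) of some length $n$ depending only on $\mathcal V$. Take $(a,c) \in \Theta \cap (\sigma \circ \sigma)$, so there is $b$ with $(a,b), (b,c) \in \sigma$, hence $(a,b) \in R_i$ and $(b,c) \in R_j$ for some indices $i,j$. The plan is to feed the triple $a, b, c$ into the Jónsson chain applied to the pair of reflexive admissible relations $R_i$ and $R_j$ (note $(a,c) \in R_i \circ R_j$), producing a chain $a = d_0, d_1, \dots, d_h = c$ in which consecutive pairs alternately lie in $R_i$ or $R_j$, hence in $\sigma$; the distributivity identities for relations then give that each consecutive pair also lies in $\Theta$, because $(a,c) \in \Theta$ and $\Theta$ is a tolerance, so each $(d_\ell, d_{\ell+1})$ can be shown to be $\Theta$-related by the usual argument that the Jónsson term values, evaluated appropriately, all sit inside the tolerance block. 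Thus $(a,c) \in (\Theta \cap \sigma)\alt h$ with $h$ depending only on the Jónsson length. The delicate bookkeeping is making the alternation pattern of the $R_i, R_j$ steps match a fixed $h$ independent of which pair $(i,j)$ arose — this is exactly where the ``only two relations at a time'' observation is essential, and where the function $f$ alluded to in the introduction implicitly appears.

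Finally, the equivalence ``holds for $\mathcal V$ iff holds for $\mathbf F_{\mathcal V}(3)$'': each of the three conditions is witnessed by the existence of certain terms in three variables (Jónsson terms for (1) and (3); the Maltsev-type terms underlying (2)), and such term identities hold in $\mathcal V$ exactly when they hold in $\mathbf F_{\mathcal V}(3)$. Concretely, one checks that a failure of the relevant inclusion in some algebra of $\mathcal V$ can be pulled back, via the generators mapping to a witnessing triple, to a failure already visible in $\mathbf F_{\mathcal V}(3)$ — the standard free-algebra argument for Maltsev conditions. I would state this last part as a short lemma and reuse it for all three conditions simultaneously.
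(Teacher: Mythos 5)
Your proposal is correct and follows essentially the same route as the paper: the trivial specialization for (2)$\Rightarrow$(3), the substitution $\sigma=\beta\cup\gamma$ followed by absorption into an alternating chain for (3)$\Rightarrow$(1), directed J\'onsson terms applied to the two component relations $R_i, R_j$ for (1)$\Rightarrow$(2), and the free-algebra cycle for the $\mathbf F_{\mathcal V}(3)$ reduction. The ``delicate bookkeeping'' you worry about in (1)$\Rightarrow$(2) is actually a non-issue: every link of the chain lies in $\Theta\cap R_i$ or $\Theta\cap R_j$, both of which are contained in $\Theta\cap\sigma$, so the alternation pattern is irrelevant and $h$ depends only on the length of the directed J\'onsson chain.
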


  \begin{proof}
For $n=1,2,3$, let
($n$)$_{\mathcal V}$  
denote the respective condition supposed to hold
for $\mathcal V$. Let
($n$)$_{\mathbf F _{ \mathcal V }}$
denote the condition supposed to hold
just for
 $\mathbf F _{ \mathcal V } ( 3) $.

(1)$_{\mathcal V}$  $\Rightarrow $  (2)$_{\mathcal V}$
will follow from Proposition \ref{12} below.

(2) $\Rightarrow $  (3) is trivial for both indices.

(3)$_{\mathbf F _{ \mathcal V }}$ $\Rightarrow $  (1)$_{\mathcal V}$
will follow from Lemma \ref{31} below. 

Since 
($n$)$_{\mathcal V}$  
$\Rightarrow $  ($n$)$_{\mathbf F _{ \mathcal V }}$
trivially,
for $n=1,2,3$, 
and ($1$)$_{\mathbf F _{ \mathcal V }}$
$\Rightarrow $  ($1$)$_{\mathcal V}$  
by J{\'o}nsson \cite{JD},
we get that all the conditions are equivalent. 
 \end{proof}

To prove the implication (3) $\Rightarrow $  (1)
in Theorem \ref{cdist} 
we shall use a classical result by J{\'o}nsson \cite{JD}.
Recall that \emph{J{\'o}nsson terms}  are terms 
$j_0, \dots, j_k$, for $k \geq 1$, satisfying
\begin{align} \labbel{j1} 
\tag{J1}
  x&= j_0(x,y,z),  \quad \ \ \ \ j_{k}(x,y,z)=z   \\
\labbel{j2}
\tag{J2}
  x&=j_i(x,y, x), 
\quad \ \ \  \text{ for } 
0 \leq i \leq k,
 \\ 
 \labbel{j3} \tag{J3} 
\begin{split}  
 j_{i}(x,x,z) &=
j_{i+1}(x,x,z),
 \quad \text{ for even $i$,\ } 
0 \leq i < k,
  \\ 
 j_{i}(x,z,z)&=
j_{i+1}(x,z,z),
  \quad \text{ for odd $i$,\ } 
0 \leq i < k,
 \end{split}
\end{align}

J{\'o}nsson proved that a variety $\mathcal V$ 
is congruence distributive if and only if there is some 
 $k$ such that $\mathcal V$  
 has J{\'o}nsson terms $j_0, j_1, \dots j_k $.    
If this is the case, $\mathcal V$ is said to be
\emph{$k$-distributive} or to be 
$\Delta_k$.  

\begin{lemma} \labbel{31}
If a variety $\mathcal V$ 
(equivalently,  $\mathbf F _{ \mathcal V } ( 3) $)
satisfies condition (3)
in Theorem \ref{cdist} for some specified $k$,
then $\mathcal V$ is $(k+1)$-distributive.   
 \end{lemma}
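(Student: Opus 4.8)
The plan is to run J{\'o}nsson's criterion inside $\mathbf F _{ \mathcal V } ( 3) $: condition (3) hands us a chain from which we extract J{\'o}nsson terms $j_0, \dots, j_{k+1}$. We may assume $\mathcal V$ is nontrivial, since a trivial variety is $(k+1)$-distributive. Let $x,y,z$ generate $\mathbf F _{ \mathcal V } ( 3) $ freely and put $\alpha = \cg(x,z)$ and $\sigma = \cg(x,y) \cup \cg(y,z)$; thus $\sigma$ is a set-theoretical union of two congruences, so condition (3) applies to $(\alpha, \sigma)$. Since $(x,y) \in \cg(x,y) \subseteq \sigma$ and $(y,z) \in \cg(y,z) \subseteq \sigma$, we have $(x,z) \in \sigma \circ \sigma$, and of course $(x,z) \in \alpha$; hence $(x,z) \in \alpha ( \sigma \circ \sigma ) \subseteq ( \alpha \sigma ) \alt k$. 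This gives elements $x = u_0, u_1, \dots, u_k = z$ of $\mathbf F _{ \mathcal V } ( 3) $ with $(u_i, u_{i+1}) \in \alpha \cap \sigma$ for $0 \le i < k$; write $u_i = t_i(x,y,z)$ for ternary terms $t_i$, where $t_0$ and $t_k$ may be taken to be the first and the third projection.

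Next I would translate these memberships into identities of $\mathcal V$, using the presentation $\mathbf F _{ \mathcal V } ( 3) / \cg(x,z) \cong \mathbf F _{ \mathcal V } ( 2) $ and its analogues for $\cg(x,y)$ and $\cg(y,z)$. From $(u_i, u_{i+1}) \in \alpha$ we get $\mathcal V \models t_i(x,y,x) = t_{i+1}(x,y,x)$; since $t_0(x,y,x) = x$, this forces $\mathcal V \models t_i(x,y,x) = x$ for every $i$, which is exactly \eqref{j2} for $j_i := t_i$. From $(u_i, u_{i+1}) \in \sigma = \cg(x,y) \cup \cg(y,z)$ we get, for each $i$, that at least one of $\mathcal V \models t_i(x,x,z) = t_{i+1}(x,x,z)$ or $\mathcal V \models t_i(x,z,z) = t_{i+1}(x,z,z)$ holds; call the $i$-th step a $\cg(x,y)$-step in the first case and a $\cg(y,z)$-step in the second (a step may be of both kinds). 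So the $t_i$ satisfy all the requirements on J{\'o}nsson terms except that the two kinds of steps need not occur in the alternating pattern of \eqref{j3}.

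What remains is a purely combinatorial repair of the pattern. If two consecutive steps share a kind --- say $(t_{i-1},t_i)$ and $(t_i,t_{i+1})$ are both $\cg(x,y)$-steps --- then $\mathcal V \models t_{i-1}(x,x,z) = t_{i+1}(x,x,z)$, so $t_i$ may be deleted: the shorter chain still joins $x$ to $z$ and still satisfies \eqref{j2}. Each deletion strictly shortens the chain and leaves the endpoints $x,z$ in place, so after finitely many deletions we reach a chain $x = s_0, s_1, \dots, s_m = z$ with $m \le k$ in which no two consecutive steps share a kind. As two nonempty subsets of a two-element set are disjoint only when they are the two singletons, the kinds of the steps now strictly alternate; and if $m \le 1$ this forces $\mathcal V \models x = z$, contradicting nontriviality, so $m \ge 2$. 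If the first step is a $\cg(x,y)$-step, then $s_0, \dots, s_m$ are J{\'o}nsson terms and $\mathcal V$ is $m$-distributive; if the first step is a $\cg(y,z)$-step, prepending a copy of $x$ produces the J{\'o}nsson chain $x, s_0, s_1, \dots, s_m$ of length $m+1 \le k+1$. In both cases $\mathcal V$ is $(k+1)$-distributive, since appending copies of the last term turns an $n$-element chain of J{\'o}nsson terms into an $n'$-element one for every $n' \ge n$. Finally, condition (3) was used only for relations on $\mathbf F _{ \mathcal V } ( 3) $, which accounts for the parenthetical ``equivalently'' in the statement.

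I expect the combinatorial repair to be the main obstacle: one must check that the deletions really cost at most one extra term --- so that we land precisely at $(k+1)$-distributivity rather than something weaker --- and that steps which happen to be of both kinds are correctly handled by the alternation argument. The passage between congruence memberships and identities of $\mathcal V$ is routine once the presentations of the relevant quotients of $\mathbf F _{ \mathcal V } ( 3) $ are recalled.
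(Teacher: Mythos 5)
Your proof is correct, and it starts from the same substitution the paper uses ($\alpha=\cg(x,z)$, $\sigma=\cg(x,y)\cup\cg(y,z)$, so that $(x,z)\in\alpha(\sigma\circ\sigma)\subseteq(\alpha\sigma)\alt k$), but it finishes along a different road. The paper never descends to terms: it observes that $\alpha\sigma=\alpha\beta\cup\alpha\gamma$ is contained in \emph{both} $\alpha\beta\circ\alpha\gamma$ and $\alpha\gamma\circ\alpha\beta$ (by reflexivity), expands the $k$-fold composite as the alternating product $(\alpha\beta\circ\alpha\gamma)\circ_k(\alpha\gamma\circ\alpha\beta)$, lets adjacent equal congruences absorb by transitivity to get $\alpha(\beta\circ\gamma)\subseteq\alpha\beta\circ_{k+1}\alpha\gamma$, and then cites the classical equivalence of this congruence inclusion with $(k+1)$-distributivity. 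You instead re-derive the Maltsev condition by hand: you unwind the chain into terms $t_0,\dots,t_k$, classify each step by which congruence witnesses it, and repair the pattern by deleting terms where consecutive steps share a kind. Your deletion step is exactly the same absorption phenomenon, performed at the term level rather than uniformly at the relational level, so your argument is longer but more self-contained (it does not need Jónsson's theorem as a black box, only the definition of Jónsson terms). One bonus of your case analysis on the kind of the first step is that it recovers the sharper dichotomy the paper records separately as Lemma \ref{slmore}: either $\alpha(\beta\circ\gamma)\subseteq\alpha\beta\circ_k\alpha\gamma$ or $\alpha(\beta\circ\gamma)\subseteq\alpha\gamma\circ_k\alpha\beta$. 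All the individual steps of your argument check out, including the handling of steps that are of both kinds (disjoint nonempty subsets of a two-element set must be the two singletons) and the padding conventions that make $m$-distributivity imply $(k+1)$-distributivity for $m\le k$.
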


 \begin{proof}
Let $\alpha$, $\beta$, $\gamma$ be congruences
and $\sigma= \beta \cup \gamma $.
From condition
(3) we get
\begin{multline} \tag{C} \labbel{3}   
\alpha ( \beta \circ \gamma ) \subseteq 
\alpha ( \sigma \circ \sigma )
\subseteq 
( \alpha   \sigma ) \alt k 
=
(\alpha \beta  \cup \alpha \gamma ) \alt k 
\subseteq 
\\
(\alpha \beta  \circ \alpha \gamma ) \circ _ k 
(\alpha \gamma  \circ \alpha \beta  ) =
\alpha \beta  \circ_{k+1} \alpha \gamma,
\end{multline}   
since
$\alpha \gamma $ and $\alpha \beta $ 
are congruences, hence transitive.  
It is a standard and well-known fact 
already implicit in \cite{JD}  that,
within a variety (equivalently, in $\mathbf F _{ \mathcal V } ( 3) $),
 the identity 
$\alpha ( \beta \circ \gamma ) \subseteq 
\alpha \beta  \circ_{k+1} \alpha \gamma$  
is equivalent to
 $(k+1)$-distributivity.
See, e.g.,\ \cite{JDS} for full details. Thus
the lemma is proved. 
 \end{proof}    

In fact, slightly more can be proved.

\begin{lemma}      \labbel{slmore}  
If a variety $\mathcal V$ satisfies condition (3)
in Theorem \ref{cdist} for  some $k$,
then
 $\mathcal V$ satisfies either 
$\alpha ( \beta \circ \gamma ) \subseteq 
\alpha \beta  \circ_{k} \alpha \gamma$  
or 
$\alpha ( \beta \circ \gamma ) \subseteq 
\alpha \gamma   \circ_{k} \alpha \beta $
for congruences.  
\end{lemma}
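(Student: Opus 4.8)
The plan is to sharpen the computation \eqref{3} from the proof of Lemma \ref{31}. We may assume $\mathcal V$ nontrivial, and we work in the free algebra $\mathbf F = \mathbf F _{ \mathcal V } ( 3) \in \mathcal V$ on free generators $x,y,z$. Put $\beta_0 = \cg(x,y)$, $\gamma_0 = \cg(y,z)$, $\alpha_0 = \cg(x,z)$ and $\sigma = \beta_0 \cup \gamma_0$, a union of two congruences. Since $x \mathrel{\beta_0} y \mathrel{\gamma_0} z$ and $x \mathrel{\alpha_0} z$, we have $\langle x,z\rangle \in \alpha_0( \sigma \circ \sigma )$, hence condition (3) gives $\langle x,z\rangle \in ( \alpha_0 \sigma ) \alt k = ( \alpha_0 \beta_0 \cup \alpha_0 \gamma_0 ) \alt k$. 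Distributing $\circ$ over $\cup$, this means that there is \emph{one} specific function $f \colon \{1, \dots , k\} \to \{ \beta, \gamma \}$ with $\langle x,z\rangle \in \alpha_0 R^0_{f(1)} \circ \alpha_0 R^0_{f(2)} \circ \dots \circ \alpha_0 R^0_{f(k)}$, where $R^0_\beta = \beta_0$ and $R^0_\gamma = \gamma_0$.

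Next I would observe, by the standard argument already alluded to after \eqref{3} (see \cite{JD,JDS}), that this single pattern $f$ forces the \emph{identity}
\[
\alpha( \beta \circ \gamma ) \subseteq \alpha R_{f(1)} \circ \alpha R_{f(2)} \circ \dots \circ \alpha R_{f(k)}
\]
to hold in $\mathcal V$ for all congruences $\alpha,\beta,\gamma$, where now $R_\beta = \beta$ and $R_\gamma = \gamma$. Indeed, if $a \mathrel{\beta} b \mathrel{\gamma} c$ and $a \mathrel{\alpha} c$ in some $\mathbf A \in \mathcal V$, the homomorphism $\mathbf F \to \mathbf A$ with $x \mapsto a$, $y \mapsto b$, $z \mapsto c$ sends $\alpha_0$, $\beta_0$, $\gamma_0$ into $\alpha$, $\beta$, $\gamma$ respectively, and maps intersections and compositions into intersections and compositions; applying it to a chain witnessing $\langle x,z\rangle$ on the right-hand side above produces a chain witnessing $\langle a,c\rangle$.

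It remains to simplify $f$. Because $\alpha\beta$ and $\alpha\gamma$ are intersections of congruences, hence transitive, two consecutive factors of $\alpha R_{f(1)} \circ \dots \circ \alpha R_{f(k)}$ carrying the same label can be amalgamated; iterating, the right-hand side is contained in $\alpha\beta \circ_\ell \alpha\gamma$ or in $\alpha\gamma \circ_\ell \alpha\beta$ for some $\ell \le k$, according as the reduced sequence begins with $\beta$ or with $\gamma$. Finally, since $\alpha\beta$ and $\alpha\gamma$ are reflexive, one may append $k-\ell$ trivial steps at the end of such an alternating product without changing its endpoints, so $\alpha\beta \circ_\ell \alpha\gamma \subseteq \alpha\beta \circ_k \alpha\gamma$ and $\alpha\gamma \circ_\ell \alpha\beta \subseteq \alpha\gamma \circ_k \alpha\beta$ (no parity condition is needed, the added steps being reflexivities). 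Combining with the identity of the previous paragraph gives $\alpha( \beta \circ \gamma ) \subseteq \alpha\beta \circ_k \alpha\gamma$ in the first case and $\alpha( \beta \circ \gamma ) \subseteq \alpha\gamma \circ_k \alpha\beta$ in the second. The only delicate step is the passage from $\mathbf F _{ \mathcal V } ( 3) $ to a universal identity, but this is exactly the classical Maltsev-style argument already used for Lemma \ref{31}; what is new here is merely that evaluating in $\mathbf F _{ \mathcal V } ( 3) $ exhibits a single product of at most $k$ factors, which after merging is asymmetric, in place of the symmetric over-estimate used in \eqref{3}.
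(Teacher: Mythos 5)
Your proof is correct and follows essentially the same route as the paper's: evaluate condition (3) at the generators of $\mathbf F _{ \mathcal V } ( 3) $ with $\sigma = \cg(x,y) \cup \cg(y,z)$, distribute $\circ$ over $\cup$ to extract a single $k$-letter word in $\{\beta,\gamma\}$, reduce it to one of the two alternating products $\alpha\beta \circ_k \alpha\gamma$ or $\alpha\gamma \circ_k \alpha\beta$ via transitivity (merging) and reflexivity (padding), and transfer to all of $\mathcal V$ by the standard homomorphism argument. The only difference is cosmetic: you transfer the specific word-pattern identity to $\mathcal V$ before simplifying it, whereas the paper absorbs the merging and padding into the single inclusion $(\alpha\beta \cup \alpha\gamma)\alt k \subseteq (\alpha\beta \circ_k \alpha\gamma) \cup (\alpha\gamma \circ_k \alpha\beta)$ and applies the homomorphism argument last.
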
 

\begin{proof}
Let us work in $\mathbf F _{ \mathcal V } ( 3 ) $
with generators $x$, $y$ and  $z$.
Let $ \alpha =Cg(x,z)$, $ \beta =Cg(x,y)$ and  $ \gamma  =Cg(y,z)$,
hence 
$(x,z) \in \alpha ( \beta \circ \gamma )$. 
Since
\begin{equation*}
(\alpha \beta  \cup \alpha \gamma ) \alt k 
\subseteq
(\alpha \beta  \circ_{k} \alpha \gamma) \cup
(\alpha \gamma  \circ_{k} \alpha \beta ), 
 \end{equation*}    
then from the first line in equation \eqref{3}
in the proof of  \ref{31} above, 
we get that either 
$(x,z) \in \alpha \beta  \circ_{k} \alpha \gamma$
or
$(x,z) \in \alpha  \gamma  \circ_{k} \alpha \beta $. 
By the standard homomorphism argument,
we have that, correspondingly,
either
$\alpha ( \beta \circ \gamma ) \subseteq 
\alpha \beta  \circ_{k} \alpha \gamma$  
or 
$\alpha ( \beta \circ \gamma ) \subseteq 
\alpha \gamma   \circ_{k} \alpha \beta $
hold for arbitrary congruences of algebras in $\mathcal V$.
\end{proof}

Compare Lemma \ref{slmore}
with   Theorem  \ref{mal} and 
Remark \ref{a}.

In order to prove (1) $\Rightarrow $  (2)
in Theorem \ref{cdist} we shall
rely heavily on a
recent result by Kazda,  Kozik,  McKenzie and Moore
\cite{kkmm}. 
  \emph{Directed J{\'o}nsson terms},
or \emph{Z\'adori  terms}  
\cite{kkmm,Z} 
are obtained from J{\'o}nsson  conditions
\eqref{j1}-\eqref{j3} above
by replacing 
 condition
\eqref{j3} 
with
\begin{equation}\labbel{D}
\tag{D}
  j_{i}(x,z,z)=
j_{i+1}(x,x,z),  \quad \text{ for \ } 
0 \leq i < k.
  \end{equation}    

To the best of 
our knowledge, directed J{\'o}nsson terms first appeared unnamed
in Z\'adori   \cite[Theorem 4.1]{Z}.
Kazda,  Kozik,  McKenzie and Moore
\cite{kkmm} proved that a variety $\mathcal V$ 
has directed J{\'o}nsson terms for some $k$
if and only if $\mathcal V$  has 
J{\'o}nsson terms
for some $k'$. In particular, they obtained 
 that  a variety $\mathcal V$ is congruence distributive if and only if
$\mathcal V$  has directed J{\'o}nsson terms for some $k$.
Notice that here we are using a slightly 
different indexing of the terms,
in comparison with \cite{kkmm}.
The argument below showing that we can switch from 
congruences to tolerances is due to 
Cz\'edli and  Horv\'ath \cite{CH}.

\begin{proposition} \labbel{12}
If $\mathcal V$ is congruence distributive with
directed J{\'o}nsson terms $d_0, d_1, \dots, d_n $,
then $\mathcal V$ satisfies condition (2) in Theorem \ref{cdist}
with $h=2n-2$. 
 \end{proposition}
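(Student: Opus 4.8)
The plan is to reduce the asserted inclusion to an explicit ``chain'' statement, to build the chain directly from the directed Jónsson terms $d_0,\dots,d_n$, and to pass from the congruence case to the tolerance case by the device of Czédli and Horváth \cite{CH}. Fix $\mathbf A\in\mathcal V$, a tolerance $\Theta$ on $\mathbf A$, a U-admissible relation $\sigma=\bigcup_{\lambda}R_\lambda$ (each $R_\lambda$ reflexive and admissible), and a pair $(a,c)\in\Theta(\sigma\circ\sigma)$. Choose $b$ with $(a,b),(b,c)\in\sigma$, and then indices so that $(a,b)\in R:=R_\lambda$ and $(b,c)\in S:=R_\mu$. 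Since $R,S\subseteq\sigma$, it is enough to produce elements $a=e_0,e_1,\dots,e_{2n-2}=c$ of $\mathbf A$ with each $(e_j,e_{j+1})$ lying in $\Theta\cap R$ or in $\Theta\cap S$; ranging over all choices of $\lambda,\mu$ this yields $\Theta(\sigma\circ\sigma)\subseteq(\Theta\sigma)\alt h$ with $h=2n-2$, which is condition~(2).

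For $0\le i\le n$ set $p_i:=d_i(a,b,c)$, and for $0\le i\le n-1$ set $q_i:=d_i(a,c,c)$; by \eqref{D} one has $q_i=d_{i+1}(a,a,c)$. Applying $d_i$ in $\mathbf A\times\mathbf A$ to the pairs $(a,a),(b,c),(c,c)\in S$ gives $(p_i,q_i)\in S$, and applying $d_{i+1}$ to $(a,a),(a,b),(c,c)\in R$ gives $(q_i,p_{i+1})\in R$; hence $p_0,q_0,p_1,q_1,\dots,p_{n-1},q_{n-1},p_n$ is a chain whose links are alternately in $S$ and in $R$. By \eqref{j1} and \eqref{D} we have $p_0=q_0=a$ and $q_{n-1}=d_n(a,a,c)=p_n=c$, so the first and last links are trivial and exactly $2n-2$ genuine links remain, all contained in $\sigma$. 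Moreover $d_i(a,b,a)=d_i(a,c,a)=a$ by \eqref{j2}, so applying $d_i$ to pairs all lying in $\Theta$ — with $(a,c)$ in the third coordinate and $(b,b)$, respectively $(c,c)$, in the second — shows that every $p_i$ and every $q_i$ is $\Theta$-related to $a$. If $\Theta$ is a congruence we are done: by transitivity consecutive chain elements are $\Theta$-related, so each genuine link lies in $\Theta\cap R$ or in $\Theta\cap S$.

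For an arbitrary tolerance $\Theta$ transitivity fails, and the links $p_i\to q_i$ and $q_i\to p_{i+1}$ need not lie in $\Theta$ as they stand. To repair this I would follow Czédli and Horváth: realize $\Theta$ inside $\mathbf A\times\mathbf A$ as the subalgebra $\mathbf B=\{(x,y):x\mathrel\Theta y\}$, whose two projection kernels $\eta_1,\eta_2$ are congruences of $\mathbf B$; carry out the construction above at the congruence level inside $\mathbf B$ — feeding the directed Jónsson terms suitable elements of $\mathbf B$ built from $(a,c)$, $(c,a)$ and the diagonal lift $(b,b)$, so that transitivity of $\eta_1$ and $\eta_2$ plays the role that transitivity of $\Theta$ played above — and then project the resulting chain back to $\mathbf A$ along a coordinate. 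The step I expect to be the main obstacle is precisely this passage: the auxiliary data in $\mathbf B$ and the projection must be set up so that every link of the projected chain genuinely lies in $\Theta\cap\sigma$, and not merely in the a priori weaker $(\Theta\circ\Theta)\cap\sigma$, while its length stays exactly $2n-2$. A subsidiary point needing care is the bookkeeping that collapses the $2n$ nominal links of the first chain to $2n-2$; and no separate treatment of $\mathbf F_{\mathcal V}(3)$ is required, since the whole construction is witnessed by $a,b,c$.
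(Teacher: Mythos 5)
Your construction of the $\sigma$-chain is exactly the paper's: the elements $d_i(a,a,c)$, $d_i(a,b,c)$, $d_i(a,c,c)=d_{i+1}(a,a,c)$ for $i=1,\dots,n-1$ give $2n-2$ links, each lying in $R$ or in $S$ (hence in $\sigma$) because each step moves only one argument, and the count $h=2n-2$ is right. The gap is in the $\Theta$ part. For a congruence your argument (each $p_i$ and $q_i$ is $\Theta$-related to $a$, then transitivity) is fine, but condition (2) is stated for tolerances, and there your proposed repair --- passing to the subalgebra $\mathbf B=\{(x,y): x\mathrel\Theta y\}$ of $\mathbf A\times\mathbf A$ and projecting a congruence-level chain back to $\mathbf A$ --- is left unexecuted, and the obstacle you yourself flag (projected links landing only in $\Theta\circ\Theta$ rather than $\Theta$) is a genuine one for that route. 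That is not the form of the Cz\'edli--Horv\'ath device the paper uses.

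The missing idea is a direct substitution showing that \emph{consecutive} chain elements are $\Theta$-related inside $\mathbf A$, using only reflexivity, symmetry and admissibility of $\Theta$:
\begin{equation*}
d_i(a,a,c) = d_i\bigl(d_i(a,b,a),\,a,\,d_i(c,b,c)\bigr) \mathrel\Theta d_i\bigl(d_i(a,b,c),\,a,\,d_i(a,b,c)\bigr) = d_i(a,b,c),
\end{equation*}
where the outer equalities come from \eqref{j2} (which directed J\'onsson terms still satisfy) and the middle $\Theta$ holds by admissibility, since the three argument pairs $\bigl(d_i(a,b,a),d_i(a,b,c)\bigr)$, $(a,a)$ and $\bigl(d_i(c,b,c),d_i(a,b,c)\bigr)$ all lie in $\Theta$: apply $d_i$ to $(a,a),(b,b),(a,c)$ for the first and to $(c,a),(b,b),(c,c)$ for the third, using $a\mathrel\Theta c$ and symmetry. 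The same trick gives $d_i(a,b,c)\mathrel\Theta d_i(a,c,c)$. With this substitution your chain works verbatim for an arbitrary tolerance, no product algebra is needed, and the length $2n-2$ is unaffected.
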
 

 \begin{proof}
Fix some algebra $\mathbf A$ in $\mathcal V$ and suppose that
 $\Theta$ is a tolerance on $\mathbf A$ 
and $\sigma$ is a union of reflexive and admissible relations
on $\mathbf A$.
If $(a,c) \in  \Theta ( \sigma \circ \sigma )$,
then $ a \mathrel \Theta c  $ and 
there is $b$ such that 
$ a \mathrel \sigma b \mathrel \sigma c$.   
By using directed J{\'o}nsson terms, we get
 \begin{multline*}
 a = d_0(a,c,c) = d_1(a,a,c) \mathrel { \sigma  } d_1(a,b,c)
 \mathrel { \sigma  } d_1(a,c,c)=
\\
d_2(a,a,c)  \mathrel { \sigma  } d_2(a,b,c)
 \mathrel { \sigma  } \dots  \mathrel { \sigma  }d_{n-1}(a,b,c) \mathrel \sigma
d_{n-1}(a,c,c) = d_n(a,a,c) =c.
\end{multline*}   
Indeed, by assumption,
 $\sigma = \bigcup _{g \in G} \sigma _g $,
for some family $\{ \sigma _g \mid g \in G\}$ 
of reflexive and admissible relations.
Since $ a \mathrel \sigma b$,
then    $ a \mathrel \sigma_g b$, for some $g \in G$,
hence  $d_i(a,a,c)  \mathrel { \sigma _g } d_i(a,b,c)$, 
for every $i$, since 
$\sigma_g$ is reflexive and admissible. Thus we get
 $d_i(a,a,c)  \mathrel { \sigma } d_i(a,b,c)$
 from  $\sigma = \bigcup _{g \in G} \sigma _g $.
Similarly,
$d_i(a,b,c)  \mathrel { \sigma  } d_i(a,c,c)$.
The point is that when we go, say, from
 $d_i(a,a,c) $ to $  d_i(a,b,c)$,
only \emph{one}   
element is moved. In this case, the 
$a$ in the second position is changed to $b$ and all the other arguments
are left unchanged.  
Moreover,
\begin{equation*}
d_i(a,a,c) = d_i(d_i(a,b,a),a,d_i(c,b,c) )
\mathrel \Theta  d_i(d_i(a,b,c),a,d_i(a,b,c) ) =d_i(a,b,c) 
\end{equation*}    
 and similarly 
$d_i(a,b,c) 
\mathrel \Theta  d_i(a,c,c) $.
This is essentially an argument 
from \cite{CH}.
\end{proof} 

By iterating 
the formula 
$ \Theta ( \sigma \circ \sigma ) \subseteq ( \Theta  \sigma ) \alt h$,
we get, say, 
\begin{equation*}
 \Theta ( \sigma \circ \sigma \circ \sigma \circ \sigma )
 \subseteq ( \Theta  (\sigma \circ \sigma  )) \alt h \subseteq 
( \Theta  \sigma )^{h^2},
  \end{equation*}    
where we have used the fact that if $\sigma$ and $\tau$ 
are U-admissible, then $\sigma \circ \tau $ 
is  U-admissible, in particular, 
$\sigma \circ \sigma  $ 
is  U-admissible. 
Indeed, if $\sigma = \bigcup _{g \in G} \sigma _g $
and 
$ \tau  = \bigcup _{f \in F} \tau  _f $,
then
$\sigma \circ \tau 
 = \bigcup _{g \in G, f \in F} (\sigma _g \circ \tau _f)$. 
However, we get better bounds by adapting the proof of 
Proposition \ref{12}, as we are going to show.

\begin{proposition} \labbel{12b}
Suppose that $\mathcal V$ is a congruence distributive 
variety with
directed J{\'o}nsson terms $d_0, d_1, \dots, d_n $.
  \begin{enumerate}[(1)]
    \item  
For every $m$, 
$\mathcal V$ satisfies the identity
$ \Theta \sigma  \alt m  \subseteq ( \Theta  \sigma ) \alt {(mn-m)}$.
\item
More generally, for $m$ even, 
$\mathcal V$ satisfies 
$ \Theta( \sigma \circ_m \tau )  \subseteq \Theta  \sigma \circ_{mn-m}
\Theta \tau $.
  \end{enumerate}
In the above identities,
$\Theta$ is a tolerance and 
$\sigma$ is a U-admissible relation.
 \end{proposition}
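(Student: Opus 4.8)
The plan is to run, simultaneously for every directed J{\'o}nsson term, the same single-variable substitution argument used in Proposition~\ref{12}, but now along a chain of length $m$ rather than along a single composition $\sigma\circ\sigma$, and---for part~(2)---to keep track of which of $\sigma$ and $\tau$ is responsible for each individual step.

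First I would fix an algebra $\mathbf A\in\mathcal V$, a tolerance $\Theta$ on $\mathbf A$, and U-admissible relations $\sigma$, $\tau$ on $\mathbf A$, taking $\tau=\sigma$ for part~(1). Assume $(a_0,a_m)\in\Theta\sigma\alt m$ in case~(1), or $(a_0,a_m)\in\Theta(\sigma\circ_m\tau)$ with $m$ even in case~(2). Then $a_0\mathrel\Theta a_m$ and there are elements $a_0,a_1,\dots,a_m$ with $a_j\mathrel{\rho_j}a_{j+1}$ for $0\le j<m$, where $\rho_j=\sigma$ for all $j$ in case~(1), and $\rho_j=\sigma$ for $j$ even and $\rho_j=\tau$ for $j$ odd in case~(2). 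I would then write down, for $1\le i\le n-1$, the row $d_i(a_0,a_0,a_m),\ d_i(a_0,a_1,a_m),\ \dots,\ d_i(a_0,a_m,a_m)$, and glue the $n-1$ rows into a single chain: \eqref{D} gives $d_i(a_0,a_m,a_m)=d_{i+1}(a_0,a_0,a_m)$ at each junction, while \eqref{j1} and \eqref{D} give $a_0=d_0(a_0,a_m,a_m)=d_1(a_0,a_0,a_m)$ at the top and $d_{n-1}(a_0,a_m,a_m)=d_n(a_0,a_0,a_m)=a_m$ at the bottom. This produces a chain from $a_0$ to $a_m$ with exactly $(n-1)m=mn-m$ steps, each step replacing the middle argument $a_j$ of some $d_i$ by $a_{j+1}$ while leaving the outer arguments $a_0$, $a_m$ untouched.

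The second step is to check that every such step lies in $\Theta\cap\rho_j$. Membership in $\rho_j$ is verbatim the argument of Proposition~\ref{12}: writing $\rho_j=\bigcup_g(\rho_j)_g$ as a union of reflexive and admissible relations and choosing $g$ with $a_j\mathrel{(\rho_j)_g}a_{j+1}$, admissibility of $(\rho_j)_g$ moves $d_i(a_0,a_j,a_m)$ to $d_i(a_0,a_{j+1},a_m)$ inside $(\rho_j)_g\subseteq\rho_j$. Membership in $\Theta$ is the Cz{\'e}dli--Horv{\'a}th computation \cite{CH}, carried out now at an arbitrary middle index: by \eqref{j2},
\begin{equation*}
d_i(a_0,a_j,a_m)=d_i\bigl(d_i(a_0,a_{j+1},a_0),\,a_j,\,d_i(a_m,a_{j+1},a_m)\bigr);
\end{equation*}
replacing the outer $a_0$ by $a_m$ in the first inner occurrence and the outer $a_m$ by $a_0$ in the last (legitimate since $a_0\mathrel\Theta a_m$ and $\Theta$ is symmetric and admissible), then collapsing with \eqref{j2} again, yields $d_i(a_0,a_j,a_m)\mathrel\Theta d_i(a_0,a_{j+1},a_m)$. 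The point worth stressing is that the interior nodes of the chain are in general not pairwise $\Theta$-related and that $\Theta$ is not transitive, so this pairwise verification is genuinely needed and cannot be replaced by an appeal to a single $\Theta$-class.

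Finally I would read off the pattern of the chain. In case~(1) every step is a $\Theta\sigma$-step, so $(a_0,a_m)\in(\Theta\sigma)\alt{(mn-m)}$, and no parity hypothesis is needed because all steps use the same relation. In case~(2), inside each row the steps alternate $\Theta\sigma,\Theta\tau,\Theta\sigma,\dots$, starting with $\Theta\sigma$ and---since $m$ is even---ending with $\Theta\tau$; hence when the $n-1$ rows are concatenated the alternation continues without a break at the junctions, and, as $mn-m=m(n-1)$ is even, the chain shows $(a_0,a_m)\in\Theta\sigma\circ_{mn-m}\Theta\tau$. Since $(a_0,a_m)$ and $\mathbf A$ were arbitrary, both identities hold throughout $\mathcal V$. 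The only delicate point---and the only place where the hypothesis ``$m$ even'' enters---is exactly this parity bookkeeping across row boundaries; everything else is a routine elaboration of Proposition~\ref{12} and of the computations immediately preceding the statement.
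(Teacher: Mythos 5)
Your proof is correct and follows essentially the same route as the paper's: the same decomposition into $n-1$ rows of length $m$ glued by \eqref{D}, the same one-variable-at-a-time substitution for membership in $\sigma$ or $\tau$, the same Cz\'edli--Horv\'ath computation for membership in $\Theta$, and the same parity bookkeeping at the row junctions. The only (harmless) organizational difference is that you handle (1) directly via the labels $\rho_j$ for arbitrary $m$, whereas the paper derives (1) from (2) for even $m$ and remarks that odd $m$ is similar.
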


  \begin{proof} 
 The proof is similar to the proof of Proposition \ref{12}. 
We shall prove (2). When $m$ is even, (1) is the special case
$\sigma= \tau $ 
 of
(2). When $m$ is odd, (1) allows a similar proof.  

Suppose that $(a,c) \in  \Theta( \sigma \circ_m \tau ) $.
Then $ a \mathrel \Theta c  $ and 
$ a=b_0 \mathrel \sigma b_1 \mathrel \tau  b_2 \mathrel { \sigma }
b_3 \mathrel { \tau } 
\dots \mathrel { \sigma }  b _{m-1} \mathrel { \tau  }  b _{m} = c $,
for certain $b_0, b_1, \dots$\    
We have 
$b _{m-1} \mathrel { \tau  }  b _{m}$ 
since $m$ is even. 
For $\ell=1, \dots, n-1$, arguing as in  
the proof of Proposition \ref{12}, we get
 \begin{multline*}\labbel{} 
  d_\ell(a,a,c) = d_\ell(a,b_0,c) \mathrel { \sigma  } d_\ell(a,b_1,c) \mathrel \tau   
 d_\ell(a,b_2,c) \mathrel { \sigma } \dots \\
 \mathrel \tau   d_\ell(a,b_m,c)= 
d_\ell(a,c,c)
 =d_{\ell+1}(a,a,c).
 \end{multline*}    
All the elements in the 
above chain are $\Theta$-related,
since 
\begin{multline*}
d_\ell(a,b_h,c) = d_\ell(d_\ell(a,b_k,a),b_h,d_\ell(c,b_k,c) )
\mathrel \Theta \\
 d_\ell(d_\ell(a,b_k,c),b_h,d_\ell(a,b_k,c) )  =d_\ell(a,b_k,c),
\end{multline*}    
for all indices $h $, $  k$.  
This shows that 
 $( d_\ell(a,a,c), d_{\ell+1}(a,a,c)) \in \Theta  \sigma \circ _m  \Theta \tau $,
for $\ell=1, \dots, n-1$.
Since 
 $a = d_0(a,c,c ) =  d_1(a,a,c)$
and $d_n(a,a,c) =c $,
then, by putting everything together, 
 we get a chain of length 
$m(n-1)$ 
from $a$ to $c$.
Notice that the factors 
of the form 
$\Theta  \sigma $ and $  \Theta \tau$ 
 do always alternate, since $m$ is assumed to be even.
Indeed,   
 $d_\ell(a,b_{m-1},c) \mathrel \tau  
d_\ell(a,c,c)
 =d_{\ell+1}(a,a,c)
\mathrel \sigma d_{\ell+1}(a,b_1,c)$.
\end{proof}

If we restrict ourselves to relations $\sigma$ 
which are unions of tolerances, 
the above arguments can be carried over just using
J{\'o}nsson terms.
Let 
$S \mathrel {_m\circ} T$  denote
$  \dots T \circ S \circ T $ with $m$ factors. 
Formally,
$S \mathrel {_m\circ} T= S \circ _m T$
if $m $ is even and  
$S \mathrel {_m\circ} T= T \circ _m S$
if $m$ is odd. 
Let $^\smallsmile $ denotes  \emph{converse}.

\begin{proposition} \labbel{12c}
If $\mathcal V$ is a $k$-distributive variety,
then the following statements hold.

  \begin{enumerate}[(1)]   
 \item 
For every $m$, 
$\mathcal V$ satisfies the identity
$ \Theta \sigma  \alt m  \subseteq ( \Theta  \sigma ) \alt {(mk-m)}$,
for every  tolerance $\Theta$ and 
every 
 relation $\sigma$ which is a union of tolerances.

\item
For every $m$,
$\mathcal V$ satisfies 
\begin{equation*}\labbel{kd}
      \Theta( \sigma \circ_m \tau )  \subseteq 
(\Theta  \sigma \circ_m  \Theta \tau ) \circ_{k-1}
(\Theta \tau  ^\smallsmile \mathrel {_m\circ} \Theta \sigma ^\smallsmile ),
  \end{equation*}
for every  tolerance $\Theta$ and 
all
  U-admissible relations $\sigma$ and $\tau$. 
  \end{enumerate}    

In turn, if a variety satisfies (2) above in the particular case
$m=2$, then 
$\mathcal V$ is $k$-distributive.
 \end{proposition}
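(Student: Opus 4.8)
The plan is to obtain both positive statements (1) and (2) by the method of Proposition~\ref{12b}, with ordinary J{\'o}nsson terms $j_0,\dots,j_k$ in the role of the directed ones. Since we no longer have the directed identity~\eqref{D}, the J{\'o}nsson chains must be traversed alternately \emph{forwards} and \emph{backwards}, and it is precisely this that forces the converses $\sigma^\smallsmile,\tau^\smallsmile$ and the second block $\Theta\tau^\smallsmile\mathrel{_m\circ}\Theta\sigma^\smallsmile$ to appear in (2). I would prove (2) first and then read off (1) as the special case $\tau=\sigma$: a union of tolerances is in particular a union of reflexive and admissible relations, hence U-admissible, and it is symmetric, so $\sigma^\smallsmile=\sigma$ and $\Theta\sigma^\smallsmile=\Theta\sigma$; both blocks of (2) then collapse to $(\Theta\sigma)\alt m$, and the $\circ_{k-1}$-composition of $k-1$ copies of $(\Theta\sigma)\alt m$ is exactly $(\Theta\sigma)\alt{(mk-m)}$.

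For (2), fix $\mathbf A\in\mathcal V$, a tolerance $\Theta$, and U-admissible $\sigma=\bigcup_{g}\sigma_g$ and $\tau=\bigcup_{f}\tau_f$. Take $(a,c)\in\Theta(\sigma\circ_m\tau)$, so $a\mathrel\Theta c$ and $a=b_0\mathrel\sigma b_1\mathrel\tau b_2\mathrel\sigma\cdots b_m=c$ with $m$ alternating factors. For each $\ell\in\{0,\dots,k\}$ consider the sequence $j_\ell(a,b_0,c),\dots,j_\ell(a,b_m,c)$, whose endpoints are $j_\ell(a,a,c)$ and $j_\ell(a,c,c)$. Two consecutive entries are $\sigma$-related whenever $b_j\mathrel\sigma b_{j+1}$: pick $g$ with $b_j\mathrel{\sigma_g}b_{j+1}$; since $\sigma_g$ is reflexive and admissible, $j_\ell(a,b_j,c)\mathrel{\sigma_g}j_\ell(a,b_{j+1},c)$, hence $\mathrel\sigma$; and similarly for $\tau$. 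Moreover any two entries are $\Theta$-related, \emph{without} using transitivity of $\Theta$, by
\begin{multline*}
j_\ell(a,b_j,c)=j_\ell\bigl(j_\ell(a,b_{j'},a),b_j,j_\ell(c,b_{j'},c)\bigr)\\
\mathrel\Theta j_\ell\bigl(j_\ell(a,b_{j'},c),b_j,j_\ell(a,b_{j'},c)\bigr)=j_\ell(a,b_{j'},c),
\end{multline*}
which uses $a\mathrel\Theta c$, admissibility of $\Theta$, and \eqref{j2} — the device already used in Proposition~\ref{12b}. Thus, for each $\ell$, a chain of type $\Theta\sigma\circ_m\Theta\tau$ joins $j_\ell(a,a,c)$ to $j_\ell(a,c,c)$, and, read backwards, a chain of type $\Theta\tau^\smallsmile\mathrel{_m\circ}\Theta\sigma^\smallsmile$ joins $j_\ell(a,c,c)$ to $j_\ell(a,a,c)$ (the $\mathrel{_m\circ}$ notation being designed precisely to absorb the parity bookkeeping for $m$ odd versus even).

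It then remains to splice these chains together using \eqref{j1} and \eqref{j3}. One starts at $a=j_0(a,a,c)=j_1(a,a,c)$, runs the $\ell=1$ chain forwards to $j_1(a,c,c)=j_2(a,c,c)$, runs the $\ell=2$ chain backwards to $j_2(a,a,c)=j_3(a,a,c)$, and so on — odd $\ell$ forwards, even $\ell$ backwards — and after the $\ell=k-1$ chain one lands on $j_k(a,a,c)$ or $j_k(a,c,c)$, both equal to $c$ by \eqref{j1}. The odd-$\ell$ chains contribute blocks $\Theta\sigma\circ_m\Theta\tau$ and the even-$\ell$ ones blocks $\Theta\tau^\smallsmile\mathrel{_m\circ}\Theta\sigma^\smallsmile$, alternating and $k-1$ in total, which is exactly the right-hand side of (2). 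I expect the only real difficulty to be bookkeeping: tracking which $\ell$ is traversed in which direction, checking that consecutive blocks share an endpoint, and confirming that the first and last factors of each block match the $\mathrel{_m\circ}$ and $\circ_m$ conventions in both parities of $m$; there is nothing conceptual beyond Proposition~\ref{12b}.

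For the final assertion, assume $\mathcal V$ satisfies (2) with $m=2$; as $2$ is even, $\Theta\tau^\smallsmile\mathrel{_2\circ}\Theta\sigma^\smallsmile=\Theta\tau^\smallsmile\circ\Theta\sigma^\smallsmile$, so the hypothesis reads $\Theta(\sigma\circ\tau)\subseteq(\Theta\sigma\circ\Theta\tau)\circ_{k-1}(\Theta\tau^\smallsmile\circ\Theta\sigma^\smallsmile)$. Work in $\mathbf F_{\mathcal V}(3)$ with generators $x,y,z$, set $\alpha=\cg(x,z)$, $\beta=\cg(x,y)$, $\gamma=\cg(y,z)$ — so $(x,z)\in\alpha(\beta\circ\gamma)$ — and apply the hypothesis with $\Theta=\alpha$, $\sigma=\beta$, $\tau=\gamma$. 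Since $\beta,\gamma$ are congruences, $\beta^\smallsmile=\beta$, $\gamma^\smallsmile=\gamma$, and $\alpha\beta,\alpha\gamma$ are transitive, so in the $k-1$ blocks $\alpha\beta\circ\alpha\gamma$ and $\alpha\gamma\circ\alpha\beta$ the $k-2$ internal junctions each merge two equal congruence factors into one, and the total length $2(k-1)$ telescopes to $k$: that is, $(x,z)\in\alpha\beta\circ_k\alpha\gamma$ in $\mathbf F_{\mathcal V}(3)$. By the standard fact recalled in the proof of Lemma~\ref{31} (implicit in \cite{JD}), this forces $\alpha(\beta\circ\gamma)\subseteq\alpha\beta\circ_k\alpha\gamma$ for all congruences of all algebras in $\mathcal V$, i.e.\ $\mathcal V$ is $k$-distributive. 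The point to watch here is just that the telescoping produces exactly $k$, not $k\pm1$, factors.
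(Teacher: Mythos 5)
Your proof is correct and follows essentially the same route as the paper: prove (2) with ordinary J{\'o}nsson terms by traversing the $\ell$-th chain forwards for odd $\ell$ and backwards (whence the converses) for even $\ell$, splicing via \eqref{j3}; deduce (1) from the case $\sigma=\tau$ using $\sigma^\smallsmile=\sigma$; and obtain $k$-distributivity from the $m=2$ case by letting the relations be congruences and absorbing the adjacent equal factors down to $\alpha\beta\circ_k\alpha\gamma$. The only (immaterial) difference is that you route the last step through $\mathbf F_{\mathcal V}(3)$ and the homomorphism argument, where the paper just quotes the equivalence of $\alpha(\beta\circ\gamma)\subseteq\alpha\beta\circ_k\alpha\gamma$ with $k$-distributivity.
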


\begin{proof}
(1) is the particular case of (2)
when $ \sigma = \tau $, since 
if $\sigma$ is a union of tolerances, then 
$\sigma = \sigma ^\smallsmile $.
Hence it is enough to prove (2).
 
Let $j_0, \dots,  j_k$ be J{\'o}nsson terms.
Under the same assumptions as in the proof of 
\ref{12b}, we get
 $( j_\ell(a,a,c), j_{\ell}(a,c,c)) \in \Theta  \sigma \circ _m  \Theta \tau $,
for $\ell=1, \dots, k-1$ and, taking converses,
 $( j_\ell(a,c,c), j_{\ell}(a,a,c)) \in
 \Theta \tau  ^\smallsmile \mathrel {_m\circ} \Theta \sigma ^\smallsmile$.
 In the present situation we have 
$ j_\ell(a,a,c)= j_{\ell+1}(a,a,c)$, for $\ell$ even, 
and  $ j_\ell(a,c,c)= j_{\ell+1}(a,c,c)$, for $\ell$ odd.
Hence, in order to join the partial chains, we have to consider
$\Theta  \sigma \circ _m  \Theta \tau $
and 
$\Theta \tau  ^\smallsmile \mathrel {_m\circ} \Theta \sigma ^\smallsmile$,
alternatively.

In order to prove the last sentence, take 
$  \alpha =\Theta$, $\beta= \sigma $
and $ \gamma = \tau $ congruences.
In the displayed formula in (2) we then get adjacent
occurrences of      
$\alpha \gamma $ and 
$\alpha \beta $
which pairwise absorb.
We end up
with  
$\alpha ( \beta \circ \gamma ) \subseteq
\alpha \beta  \circ _{k} \alpha \gamma $,  
an identity equivalent to 
$k$-distributivity. 
\end{proof}

Recall that $ \astt $ denote transitive closure.

\begin{corollary} \labbel{cor} 
Within a variety $\mathcal V$, each of the following identities
is equivalent to congruence distributivity.
 Each   identity
holds in $\mathcal V$ if and only if 
it holds in $\mathbf F _{ \mathcal V } ( 3) $.
  \begin{enumerate}   
\item[(1)]
$\Theta( \sigma \circ \sigma )
 \subseteq  ( \Theta  \sigma  ) \astt $, equivalently, 
$(\Theta( \sigma \circ \sigma )) \astt 
 = ( \Theta  \sigma ) \astt $;
\item[(1$'$)]
$\Theta( \sigma \circ \sigma )
 \subseteq  ( \Theta  \sigma  \circ \Theta  \sigma ^\smallsmile  ) \astt $;
\item[(1$''$)]
$\Theta( \sigma \circ \sigma ^\smallsmile )
 \subseteq  ( \Theta  \sigma  \circ \Theta  \sigma ^\smallsmile  ) \astt $,
equivalently,
$(\Theta( \sigma \circ \sigma ^\smallsmile )) \astt 
= ( \Theta  \sigma  \circ \Theta  \sigma ^\smallsmile  ) \astt $;
 \item[(2)]   
$ \Theta \sigma  \astt   \subseteq ( \Theta  \sigma ) \astt $, equivalently,
$ (\Theta \sigma  \astt ) \astt   = ( \Theta  \sigma ) \astt $;
\item[(3)]
$\Theta( \sigma \circ \tau )
 \subseteq  ( \Theta  \sigma  \circ
\Theta \tau ) \astt $, equivalently, 
$(\Theta( \sigma \circ \tau )) \astt 
 = ( \Theta  \sigma  \circ
\Theta \tau ) \astt $;
\item[(4)]
$ \Theta( \sigma \circ \tau )  \astt   \subseteq ( \Theta  \sigma  \circ
\Theta \tau ) \astt $, equivalently, 
$ (\Theta( \sigma \circ \tau )  \astt ) \astt  
 = ( \Theta  \sigma  \circ
\Theta \tau ) \astt $;
\item[(4$'$)]
$ \Theta( \sigma \circ \tau )  \astt   \subseteq ( \Theta  \sigma  \circ
\Theta \tau \circ \Theta  \sigma ^\smallsmile   \circ
\Theta \tau ^\smallsmile  ) \astt $, equivalently, 

\noindent
$ (\Theta( \sigma \circ \tau  \circ \sigma ^\smallsmile \circ \tau ^\smallsmile )  \astt ) \astt  
 = ( \Theta  \sigma  \circ
\Theta \tau \circ \Theta  \sigma ^\smallsmile   \circ
\Theta \tau ^\smallsmile  ) \astt $.
 \end{enumerate}
In the above identities,
$\Theta$ is a tolerance, equivalently, a congruence, and 
$\sigma$ and $\tau$ are U-admissible relations, 
equivalently,
relations  which are 
expressible as the union of two congruences.
\end{corollary}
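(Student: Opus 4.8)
The plan is to derive every identity in Corollary \ref{cor} from Theorem \ref{cdist} together with Propositions \ref{12b} and \ref{12c}, exploiting the fact that a finite-length identity of the form $\Theta(\sigma\circ\sigma)\subseteq(\Theta\sigma)\alt h$ trivially implies its transitive-closure version. The one non-trivial direction in each case is to go back from a transitive-closure identity to congruence distributivity, and for that I would specialize $\sigma,\tau$ to congruences (or just to single reflexive admissible relations) and collapse the transitive closure using J{\'o}nsson's classical characterization, exactly as in Lemma \ref{31}.

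First I would record the two easy facts used throughout: (a) if $\mathcal V$ satisfies $\Theta(\sigma\circ\sigma)\subseteq(\Theta\sigma)\alt h$ for some $h$, then it satisfies $\Theta(\sigma\circ\sigma)\subseteq(\Theta\sigma)\astt$, and similarly each finite identity implies its starred counterpart; (b) the equivalences ``$A\subseteq B\astt$'' versus ``$(\dots)\astt=(\dots)\astt$'' listed in each item are purely formal manipulations with transitive closure (noting $(\Theta\sigma)\astt$ is already transitive, and that for a symmetric-ish right-hand side like $\Theta\sigma\circ\Theta\sigma^\smallsmile$ its transitive closure is again the transitive closure of the left side), so I only need to prove one representative identity per item. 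Then congruence distributivity $\Rightarrow$ (1): by Theorem \ref{cdist}(1)$\Rightarrow$(2) we have a finite $h$ with $\Theta(\sigma\circ\sigma)\subseteq(\Theta\sigma)\alt h\subseteq(\Theta\sigma)\astt$. For (1$'$) and (1$''$), apply Proposition \ref{12c}(2) with $m=2$: this gives $\Theta(\sigma\circ\sigma)$ (resp.\ $\Theta(\sigma\circ\sigma^\smallsmile)$) inside a finite composite built from $\Theta\sigma\circ\Theta\sigma^\smallsmile$ and its converse blocks, hence inside $(\Theta\sigma\circ\Theta\sigma^\smallsmile)\astt$. For (2), use $\Theta\sigma\astt=\bigcup_m\Theta\sigma\alt m$ together with Proposition \ref{12c}(1): each $\Theta\sigma\alt m\subseteq(\Theta\sigma)\alt{(mk-m)}\subseteq(\Theta\sigma)\astt$, so the union is contained in $(\Theta\sigma)\astt$. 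For (3) and (4)/(4$'$), the same recipe applies using the $\circ_m$ versions of Propositions \ref{12b}(2) and \ref{12c}(2), plus the observation that $\Theta(\sigma\circ\tau)\astt=\bigcup_m\Theta(\sigma\circ_m\tau)$ when $\sigma,\tau$ are U-admissible (and $\sigma\circ_m\tau$ is again U-admissible, as noted after Proposition \ref{12}).

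For the converse directions I would specialize. In every item set $\Theta=\alpha$, and take $\sigma=\beta$, $\tau=\gamma$ to be congruences; then all occurrences of $\sigma^\smallsmile,\tau^\smallsmile$ collapse to $\beta,\gamma$, all inner compositions $\sigma\circ\sigma$, $\sigma\circ\tau$ reduce appropriately, and crucially $\alpha\beta$, $\alpha\gamma$ are transitive, so a right-hand side of the form $(\alpha\beta\circ\alpha\gamma)\astt$ equals $\alpha\beta\circ_t\alpha\gamma$ for some finite $t$ — wait, it equals the \emph{union} over all $t$. That is the point requiring care: a priori the starred right side is an infinite union. But working in $\mathbf F_{\mathcal V}(3)$ with generators $x,y,z$ and $\alpha=\cg(x,z)$, $\beta=\cg(x,y)$, $\gamma=\cg(y,z)$, the pair $(x,z)$ lies in $\alpha(\beta\circ\gamma)$, hence in the starred right side, hence in $\alpha\beta\circ_{t}\alpha\gamma$ for \emph{some particular} finite $t$; by the standard homomorphism argument this yields the finite identity $\alpha(\beta\circ\gamma)\subseteq\alpha\beta\circ_t\alpha\gamma$ across all of $\mathcal V$, which is J{\'o}nsson's condition for $t$-distributivity. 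This is exactly the manoeuvre in Lemma \ref{slmore}, and it is the main obstacle — one must be sure the relevant generating pair already witnesses membership, so that compactness of membership in a transitive closure (a finite chain) does the rest.

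The remaining bookkeeping is the ``equivalently'' clauses and the final sentence about interpreting $\Theta,\sigma,\tau$ as congruences or as unions of two congruences. For the former, I would note in each line that the right-hand side $X$ is such that $X\astt\supseteq(\text{left side})\astt$ automatically, so ``left $\subseteq X\astt$'' is equivalent to ``$(\text{left})\astt=X\astt$'' (using that $X\astt$ is transitive and contains $X$, and for the primed items that the converse closure is absorbed). For the latter, the reductions above already used only congruences, and conversely every union of two congruences is U-admissible, while Proposition \ref{12}'s proof used only that $\sigma$ is a union of reflexive admissible relations; so the class of identities is insensitive to the interpretation, exactly as in Theorem \ref{cdist}(2) versus (3). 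Finally, the ``$\mathbf F_{\mathcal V}(3)$'' clause is inherited: $(1)$ holds in $\mathcal V$ iff in $\mathbf F_{\mathcal V}(3)$ by J{\'o}nsson \cite{JD}, and each other item holds in $\mathcal V$ iff it holds in $\mathbf F_{\mathcal V}(3)$ because the forward implications factor through Theorem \ref{cdist} (whose conditions already have this property) and the converse implications were proved by a computation inside $\mathbf F_{\mathcal V}(3)$.
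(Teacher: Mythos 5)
Your forward implications are essentially the paper's (the paper is slightly more economical: it derives only the strong form of (4) from Proposition \ref{12b}(2) and obtains everything else from the trivial chain (4) $\Rightarrow$ (3) $\Rightarrow$ (1) $\Rightarrow$ (1$'$), (4) $\Rightarrow$ (2), (4) $\Rightarrow$ (1$''$), (4) $\Rightarrow$ (4$'$)), and your converse for the two-variable items (3), (4), (4$'$) --- specialize to congruences in $\mathbf F_{\mathcal V}(3)$, extract a finite chain from membership in the transitive closure, then use the homomorphism argument and J\'onsson --- is exactly the paper's closing manoeuvre. But there is a genuine gap in your converse for the one-variable items (1), (1$'$), (1$''$) and (2): you propose to ``take $\sigma=\beta$ \dots to be a congruence'' and even allow ``just a single reflexive and admissible relation''. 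With $\sigma$ a single congruence, $\sigma\circ\sigma=\sigma$ and $\sigma\astt=\sigma$, so each of these identities is trivially true in \emph{every} variety and yields nothing. With $\sigma$ a single reflexive admissible relation the situation is no better: $R\circ R=R$ holds in every congruence permutable variety by Werner \cite{W}, so (1), (1$'$), (1$''$) and (2) restricted to admissible $\sigma$ hold in congruence permutable varieties that are not congruence distributive --- the paper makes precisely this point at the start of Section \ref{prob}. The specialization that actually works, and which is the whole point of the U-admissibility hypothesis, is $\sigma=\beta\cup\gamma$ for two congruences: then $\alpha(\beta\circ\gamma)\subseteq\alpha(\sigma\circ\sigma)\subseteq(\alpha\sigma)\astt=(\alpha\beta\cup\alpha\gamma)\astt=\alpha\beta+\alpha\gamma$, and J\'onsson's theorem applies. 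You gesture at Lemma \ref{31} (which does take $\sigma=\beta\cup\gamma$) in your overview, but the explicit recipe you then give contradicts it and, taken literally, breaks the argument for four of the seven items.

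A minor further point: for the forward direction of item (2) you invoke Proposition \ref{12c}(1), which is stated only for $\sigma$ a union of \emph{tolerances}; a general U-admissible relation need not be of that form, so you should use Proposition \ref{12b}(1) instead (the analogous bound with directed J\'onsson terms, valid for all U-admissible $\sigma$).
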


 \begin{proof}
For each of the above conditions, 
let the \emph{strong form} be the one in which 
 $\Theta$  is supposed to be a tolerance and
$\sigma$, $\tau$  
are U-admissible relations.
Let the \emph{weak form} be the one in which
 $\Theta$  is supposed to be a congruence and  
$\sigma$ and $\tau$ are
 relations which are the union of two congruences.
Clearly, in each case, the strong form implies the weak form.

Inside each item, in each case the  conditions
are equivalent, since $\sigma$ and $\tau$  are reflexive
and since transitive closure is a monotone
and idempotent operator.
A further comment is necessary for (4$'$).
To get the last condition, first apply the first condition
with $\sigma \circ \tau $ in place of $\sigma$ 
and  with $\sigma ^\smallsmile \circ \tau ^\smallsmile  $ in place of
$\tau$, then apply again the first condition.

Congruence distributivity implies the strong form of
(4), as a consequence of  
 Proposition \ref{12b}(2).
Trivially, (4) $\Rightarrow $  (4$'$), 
(4) $\Rightarrow $  (1$''$),
(4) $\Rightarrow $  (3) $\Rightarrow $  (1) $\Rightarrow $  (1$'$)
 and (4) $\Rightarrow $  (2) 
$\Rightarrow $  (1),
either in the strong or in the weak case.
Moreover, the weak forms of (4) and (4$'$)
are equivalent, since if $\sigma$ is a union of congruences, then 
$\sigma= \sigma ^\smallsmile $ and the same for $\tau$.
Similarly, the weak forms of (1), (1$'$) and (1$''$)
are equivalent.

Hence, in order to close the cycles of implications,
it is enough to prove that
 the weak form of (1) implies congruence distributivity.
As in the proof of Lemma \ref{31}, take $ \Theta = \alpha $
and  $\sigma= \beta \cup \gamma $, with $\alpha, \beta $ and 
$ \gamma $ congruences. Then the weak form of (1) implies 
\begin{equation*}\labbel{} 
\alpha( \beta \circ \gamma ) \subseteq 
\alpha ( \sigma \circ \sigma ) \subseteq 
(\alpha \sigma ) \astt  =
(\alpha \beta \cup \alpha \gamma) \astt 
=\alpha \beta + \alpha \gamma. 
 \end{equation*} 
 
This condition 
(even only  in $\mathbf F _{ \mathcal V } ( 3 ) $)
implies congruence distributivity 
by J{\'o}nsson \cite{JD}. 
 \end{proof}  

By \cite{kkmm},
Proposition \ref{12b} and
 the above proof  we have that a variety
$\mathcal V$ 
 is congruence distributive
if and only if, for some (equivalently every)
$m \geq 2$, $\mathcal V$ satisfies
the identity 
$\Theta( \sigma \circ_m \sigma )
 \subseteq  ( \Theta  \sigma  ) \astt $.
A similar remark applies to all the conditions in
Corollary \ref{cor}, except for (2).

\section{Working with U-admissible relations only and a major problem}
 \labbel{prob}

Notice that we do need to consider the possibility for  $\sigma$ 
to be a U-admissible (not just admissible)
 relation in Theorem \ref{cdist}
and  in Corollary \ref{cor} (1)-(2), in order to obtain  conditions 
equivalent to congruence distributivity.
Of course, if an identity holds for U-admissible relations,
then it holds for admissible relations, too,
hence both versions of the identities
we have introduced hold in congruence distributive varieties.
However, when expressed in terms of
 admissible relations only, the identities might turn out 
to be too weak
 to imply back  congruence distributivity.
Indeed, as we mentioned in the introduction, $R \circ R = R$ holds in congruence permutable varieties,
for every reflexive and admissible relation $R$. A fortiori, 
$\Theta (R \circ R) = \Theta R \subseteq ( \Theta R) \astt $  hold,
but there are  congruence permutable varieties which are not
 congruence distributive.
Hence the identities in 
\ref{cdist}(2)(3) and 
 \ref{cor} (1)(1$'$)(2)
do not imply congruence distributivity, if 
$\sigma$ is taken to be an admissible relation.
In congruence permutable varieties 
$ R ^\smallsmile = R$ 
holds, too, again by Werner \cite{W}.
This shows that the identity in  \ref{cor}(1$''$) considered only
for admissible relations fails to imply congruence distributivity, as well.

On the other hand, we do not know 
the exact possibilities for  $\Theta$ in the identities we have discussed. 

\begin{problem} \labbel{mainp} 
(a) Do we get conditions equivalent to 
congruence distributivity if we take a reflexive and admissible relation 
$T$ 
(or even a U-admissible relation $\upsilon$)
in place of $\Theta$ in Theorem \ref{cdist} and Corollary \ref{cor}?

(b) Similarly, do we get a condition equivalent to congruence modularity
if we take a reflexive and admissible relation 
$T$ in place of $\Theta$ in equation \eqref{2}
from the introduction?  
 \end{problem}

The results of the present section
show that in many cases the answer
to Question (a) above is the same when asked for 
  admissible relations 
or for  U-admissible relations.
In any case, the next three  propositions 
provide examples of varieties in which
 corresponding identities hold true.
In passing, we also get  some further characterizations 
of varieties 
with a majority term and of
arithmetical varieties.

\begin{proposition} \labbel{maj}
For every variety $\mathcal V$, the following conditions are equivalent. \begin{enumerate}[(1)]
   \item 
$\mathcal V$ has a majority term.
\item
$\mathcal V$ satisfies 
$\alpha ( \beta \circ \gamma ) \subseteq \alpha \beta \circ \alpha \gamma $
for congruences. 
\item
$\mathcal V$ satisfies 
$\sigma ( \tau \circ \upsilon) \subseteq  \sigma  \tau \circ \sigma  \upsilon $
 for U-admissible relations, equivalently, for 
reflexive and admissible relations, equivalently, for tolerances.
  \end{enumerate}
\end{proposition}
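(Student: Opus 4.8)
The plan is to establish the cycle $(1)\Rightarrow(3)\Rightarrow(2)\Rightarrow(1)$, using the strongest conclusion ($U$-admissible relations) as the hypothesis in the step where it is hardest to obtain and the weakest (congruences) where it is easiest. First I would prove $(2)\Rightarrow(1)$: specialize the congruence identity $\alpha(\beta\circ\gamma)\subseteq\alpha\beta\circ\alpha\gamma$ in $\mathbf F_{\mathcal V}(3)$ with generators $x,y,z$, taking $\alpha=\cg(x,z)$, $\beta=\cg(x,y)$, $\gamma=\cg(y,z)$, so that $(x,z)\in\alpha(\beta\circ\gamma)$. The identity then yields an element $w$ with $x\mathrel{\alpha\beta}w\mathrel{\alpha\gamma}z$; writing $w=t(x,y,z)$ for a ternary term $t$ and reading off the relations $x\mathrel\alpha w$, $x\mathrel\beta w$, $w\mathrel\gamma z$, $w\mathrel\alpha z$ as term equations (the standard argument: $(p,q)\in\cg(a,b)$ in a free algebra forces the corresponding term identity after substituting $a$ for $b$), one obtains $t(x,x,z)=x$, $t(x,z,z)=z$, and $t(x,y,x)=x$. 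These are exactly the equations for a majority term $m(x,y,z)=t(x,y,z)$, giving $(1)$.

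Next, $(1)\Rightarrow(3)$ in the strong ($U$-admissible) form. Suppose $\mathcal V$ has a majority term $m$, let $\Theta=\sigma,\tau,\upsilon$ be $U$-admissible (say $\sigma=\bigcup_{g}\sigma_g$ with each $\sigma_g$ reflexive and admissible, and similarly for $\tau,\upsilon$), and take $(a,c)\in\sigma(\tau\circ\upsilon)$. Then $a\mathrel\sigma c$ and there is $b$ with $a\mathrel\tau b\mathrel\upsilon c$. Because $a\mathrel\sigma c$ lands in a single $\sigma_{g_0}$, and $a\mathrel\tau b$ in a single $\tau_{f_0}$, and $b\mathrel\upsilon c$ in a single $\upsilon_{e_0}$, I can compute in those individual admissible relations and then push back up to $\sigma,\tau,\upsilon$. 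Put $u=m(a,b,c)$. Moving one coordinate at a time: from $a=m(a,a,a)$ to $m(a,a,c)$ uses $a\mathrel\upsilon c$ in the third slot (so $\upsilon$-related, and by reflexivity/admissibility of $\upsilon_{e_0}$ this is legitimate); then to $u=m(a,b,c)$ uses $a\mathrel\tau b$ in the second slot ($\tau$-related); and on the other side, from $u=m(a,b,c)$ to $m(a,c,c)=m(a,c,c)$ — here I instead go $u=m(a,b,c)\to m(c,b,c)$ using $a\mathrel\sigma c$ in the first slot ($\sigma$-related), and then $m(c,b,c)=c$ by the majority identity, but I must be careful that the two displayed factors are exactly $\sigma\tau$ and... let me reorganize: the cleanest chain is $a=m(a,a,c)\mathbin{(\sigma)}m(c,a,c)=c$ — no. The correct, standard chain is
\begin{equation*}
a=m(a,a,c)\ \overset{?}{\cdots}
\end{equation*}
so more carefully: $a\mathrel{\sigma\tau} m(a,b,c)\mathrel{\sigma\upsilon} c$, where the first step is $a=m(a,a,a)\to m(a,a,c)\to m(a,b,c)$ and the second is $m(a,b,c)\to m(a,c,c)=c$ split appropriately. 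I would verify that each single-coordinate move is simultaneously witnessed by $\sigma$ (since the endpoints are $\sigma$-related via the given $a\mathrel\sigma c$ and $m$ is a term) and by the relevant one of $\tau,\upsilon$; using that $a\mathrel\Theta c$ and admissibility of each $\sigma_g$ one shows $m(a,\star,c)$ values are all $\sigma$-related, exactly as in the proofs of Proposition~\ref{12} and Proposition~\ref{12b}. This gives $(a,c)\in\sigma\tau\circ\sigma\upsilon$, hence $(3)$ in the strong form; the versions for admissible relations and for tolerances follow since tolerances are admissible and admissible relations are $U$-admissible.

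Finally $(3)\Rightarrow(2)$ is immediate: take $\sigma=\tau=\upsilon$. This closes the cycle, and since all three conditions are then shown equivalent, $(1)\Leftrightarrow(2)$ records the known majority-term characterization while $(1)\Leftrightarrow(3)$ is the new content. I expect the main obstacle to be purely bookkeeping in the step $(1)\Rightarrow(3)$: one must track, for each of the elementary one-coordinate transitions in the chain built from $m$, which of the three families $\{\sigma_g\},\{\tau_f\},\{\upsilon_e\}$ witnesses it, and simultaneously check (using $a\mathrel\Theta c$ — here $\Theta$ is itself $\sigma$, so $a\mathrel\sigma c$) that the move stays inside $\sigma$ when restricted to the left factor and inside $\sigma$ for the right factor, so that the final chain literally has the shape $\sigma\tau\circ\sigma\upsilon$. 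This is the same mild subtlety already handled in Proposition~\ref{12}, so no genuinely new difficulty arises; it is the one place where a slip in which coordinate is moved would produce the wrong composite.
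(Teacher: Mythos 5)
Your proof is correct and follows essentially the same route as the paper: the witness $m(a,b,c)$ with one coordinate moved at a time yields $(1)\Rightarrow(3)$ in the strong ($U$-admissible) form, $(3)\Rightarrow(2)$ is specialization, and $(1)\Leftrightarrow(2)$ is the standard majority-term characterization (which you spell out via the free algebra on $x,y,z$, where the paper merely cites it as well known). One harmless slip: the move from $m(a,a,a)$ to $m(a,a,c)$ neither needs nor could use ``$a\mathrel{\upsilon}c$'' (which is not among the hypotheses --- only $b\mathrel{\upsilon}c$ is); that step is simply the equality $m(a,a,c)=a$ given by the majority identity, after which the single $\tau$-move $m(a,a,c)\mathrel{\tau}m(a,b,c)$ and the separate single $\sigma$-move $a=m(a,b,a)\mathrel{\sigma}m(a,b,c)$ give $a\mathrel{\sigma\tau}m(a,b,c)$ exactly as in the paper.
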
 

 \begin{proof} 
The equivalence of (1) and (2) is standard and well-known.
If $\mathcal V$  satisfies (3)
for $U$-admissible relations, then 
$\mathcal V$ obviously satisfies (3) for admissible relations and 
for tolerances. In all cases this implies (2).
In order to prove that (1) implies the strongest form of (3),
let $m$ be a majority term. If 
$(a,c) \in \sigma ( \tau \circ \upsilon)$
with
$ a \mathrel \sigma c$
and  
$a \mathrel \tau b \mathrel {\upsilon} c $, then
$a= m(a,a, c) \mathrel \tau m(a,b,c)$ 
and
$a= m(a,b, a) \mathrel \sigma  m(a,b,c)$.
Hence $a \mathrel {\sigma \tau } m(a,b,c)$.
Here, as in the proof of Proposition \ref{12},
we are using the fact that only one element is moved at a time.   
Symmetrically,
$ m(a,b,c) \mathrel { \sigma \upsilon } c $.
Hence the element 
$m(a,b,c)$ witnesses 
$(a,c) \in \sigma \tau \circ \sigma  \upsilon$.
\end{proof} 

We are now going to prove the result analogous to 
\ref{maj} for arithmetical varieties, but first a
remark is in order.

\begin{remark} \labbel{majvsarith} 
Clearly, we can have equality in place of inclusion in 
\ref{maj}(2); actually,
\textit{as far as $\sigma$ is transitive}, we always have
 $\sigma ( \tau \circ \upsilon) \supseteq  \sigma  \tau \circ \sigma  \upsilon $.
On the other hand, for  U-admissible relations, 
the identity 
$\sigma ( \tau \circ \upsilon) =  \sigma  \tau \circ \sigma  \upsilon $
holds only in the trivial variety,
as we are going to show.
First observe that, by taking $\tau= \upsilon=1$,
we get $\sigma= \sigma \circ \sigma $.  

We now show that, for  U-admissible relations,
$\sigma= \sigma \circ \sigma $ holds only in the trivial variety.
Let $\mathbf A$ have at least two elements, 
consider the algebra $\mathbf B =\mathbf A \times \mathbf A$ 
and let $\sigma$ be the union of the kernels of the two projections.
Then $\sigma \neq 1 _{\mathbf B} = \sigma \circ \sigma  $. 
Actually, we have shown that
$\sigma= \sigma \circ \sigma $ fails in any non-trivial  variety
already for $\sigma$ a union of two congruences.

Turning to \textit{admissible} relations,
we are going to show that we can actually have
$T(R \circ S) = TR \circ TS$.
Curiously, this identity is stronger than 
having a majority term; indeed it is equivalent to arithmeticity. 
On the other hand, the inclusion
$T(R \circ S) \subseteq  TR \circ TS$
is  equivalent 
to having a majority term,
by \ref{maj}(3). 
\end{remark}      

\begin{proposition} \labbel{arith}
For every variety $\mathcal V$, the following conditions are equivalent. \begin{enumerate}[(1)]
   \item 
$\mathcal V$ is arithmetical, that is, $\mathcal V$ has a Pixley term.
\item
$\mathcal V$ satisfies 
$\alpha ( \beta \circ \gamma ) \subseteq  \alpha \gamma  \circ \alpha \beta $
for congruences.
\item
$\mathcal V$ satisfies 
$\sigma ( \tau \circ \upsilon) \subseteq  \sigma  \upsilon\circ \sigma  \tau  $
 for U-admissible relations, equivalently, for 
reflexive and admissible relations, equivalently, for tolerances.
\item
$\mathcal V$ satisfies
$T(R \circ S) = TR \circ TS$,
for reflexive and admissible relations, 
equivalently, for tolerances
(notice that in the present identity we use equality and  $R$ 
and $S$ are not shifted).
  \end{enumerate}
\end{proposition}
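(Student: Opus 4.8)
The plan is to treat the congruence-level Maltsev condition (2) as a hub: show (1)$\Leftrightarrow$(2), that (1) implies the strongest (U-admissible) form of (3) and the equality in (4), and that the weakest forms of (3) and (4) already force (2). The equivalence (1)$\Leftrightarrow$(2) is classical: $\alpha(\beta\circ\gamma)\subseteq\alpha\gamma\circ\alpha\beta$ for congruences is one of the standard Maltsev conditions for arithmeticity — putting $\alpha=1$ it gives congruence permutability, and interpreting it in $\mathbf F_{\mathcal V}(3)$ on the canonical congruences $Cg(x,z)$, $Cg(x,y)$, $Cg(y,z)$ produces, by the usual homomorphism argument, a Pixley term — so I would simply quote it.

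For (1)$\Rightarrow$(3) in its strongest form, fix a Pixley term $p$, so $p(x,y,x)=p(x,y,y)=x$ and $p(x,x,y)=y$. Suppose $(a,c)\in\sigma(\tau\circ\upsilon)$ with $a\mathrel\sigma c$ and $a\mathrel\tau b\mathrel\upsilon c$, where $\sigma$, $\tau$, $\upsilon$ are U-admissible; write each as a union of reflexive admissible relations and pick members $\sigma_g$, $\tau_f$, $\upsilon_e$ witnessing $a\mathrel{\sigma_g}c$, $a\mathrel{\tau_f}b$, $b\mathrel{\upsilon_e}c$. Set $d:=p(a,b,c)$. Moving exactly one argument at a time — precisely as in the proof of Proposition \ref{12}, so that admissibility of the single chosen member suffices — we get $a=p(a,b,a)\mathrel{\sigma_g}p(a,b,c)=d$, $a=p(a,b,b)\mathrel{\upsilon_e}p(a,b,c)=d$, $d=p(a,b,c)\mathrel{\sigma_g}p(c,b,c)=c$, and $d=p(a,b,c)\mathrel{\tau_f}p(b,b,c)=c$. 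Hence $a\mathrel{\sigma\upsilon}d\mathrel{\sigma\tau}c$, i.e.\ $(a,c)\in\sigma\upsilon\circ\sigma\tau$. The versions of (3) for reflexive admissible relations and for tolerances are special cases, and each of them implies (2) on taking $\sigma$, $\tau$, $\upsilon$ to be congruences; this closes the cycle $(1)\Rightarrow(3)_{U}\Rightarrow(3)_{\mathrm{adm}}\Rightarrow(3)_{\mathrm{tol}}\Rightarrow(2)\Rightarrow(1)$.

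For (4): an arithmetical variety has a majority term, so by Proposition \ref{maj}(3) it satisfies $T(R\circ S)\subseteq TR\circ TS$; it is also congruence permutable, hence by Werner \cite{W} every reflexive admissible relation satisfies $T\circ T=T$, so $TR\circ TS\subseteq(T\circ T)\cap(R\circ S)=T\cap(R\circ S)=T(R\circ S)$, which gives the equality in (4) (a fortiori for tolerances). Conversely, specializing (4) to $R=S=1$ gives $T=T\circ T$ for every reflexive admissible relation $T$ — by Werner \cite{W} this is exactly congruence permutability — while specializing (4) to congruences gives $\alpha(\beta\circ\gamma)=\alpha\beta\circ\alpha\gamma$, i.e.\ congruence $2$-distributivity; congruence permutability together with congruence distributivity is arithmeticity, so (4)$\Rightarrow$(1).

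The one genuinely delicate point is the tolerance version of this last implication. Taking $R=S=1$ there only forces every tolerance to be transitive, hence a congruence, and the congruence instance of (4) again yields $2$-distributivity; to conclude arithmeticity one still needs that a congruence distributive variety in which every tolerance is a congruence is congruence permutable. I expect to handle this either by invoking that known fact about tolerance-trivial congruence distributive varieties, or by re-running the $R=S=1$ specialization in $\mathbf F_{\mathcal V}(3)$ against a concrete, possibly non-transitive, principal tolerance and reading off a Maltsev term directly. Everything else is a routine combination of the Pixley-term computation above with Proposition \ref{maj} and Werner's classical characterization of congruence permutability.
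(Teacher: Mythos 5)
Your overall architecture coincides with the paper's: (1)$\Leftrightarrow$(2) is quoted as standard, (3) is derived from the Pixley term and collapses back to (2) on specializing to congruences, and (4) is handled via ``majority term plus congruence permutability'' together with Werner's characterization. Two points deserve comment. First, your computation for (1)$\Rightarrow$(3) is a genuine (and welcome) simplification. By starting from $a=p(a,b,b)$ and $a=p(a,b,a)$ and moving only the \emph{third} argument forward along $\upsilon_e$ resp.\ $\sigma_g$, and then moving only the \emph{first} argument forward along $\tau_f$ resp.\ $\sigma_g$ to reach $p(b,b,c)=c$ and $p(c,b,c)=c$, you never need a converse: all four single-coordinate moves are in the forward direction of the chosen member relations, so reflexivity and admissibility of that single member suffice. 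The paper instead writes $a=p(a,c,c)\mathrel{\upsilon ^\smallsmile }p(a,b,c)$ and must first establish $\upsilon=\upsilon ^\smallsmile $ for U-admissible relations via congruence permutability and Werner's theorem; your choice of instances makes that detour unnecessary, and the argument is correct as stated.

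Second, the only place where your proof is not yet closed is the one you flag yourself: deducing arithmeticity from the \emph{tolerance} form of (4). (The paper is terse here too: it takes $R=S=1$ and cites Werner, whose Maltsev condition concerns reflexive admissible relations rather than tolerances.) Your first proposed fix works and can be made self-contained in one line: if every tolerance is transitive, then for congruences $\alpha,\beta$ the relation $(\alpha\circ\beta)\cap(\beta\circ\alpha)$ is a tolerance, hence a congruence containing $\alpha\cup\beta$, hence it contains $\alpha\vee\beta\supseteq\alpha\circ\beta$; since it is contained in $\beta\circ\alpha$, we get $\alpha\circ\beta\subseteq\beta\circ\alpha$, i.e.\ permutability, and the congruence instance of (4) then supplies the majority term. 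With that line inserted, (4)$\Rightarrow$(1) is complete in both versions; everything else in your write-up checks out.
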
 

\begin{proof}
Again, (1) $\Leftrightarrow $ (2) is well-known and standard.
In each case, (3) $\Rightarrow $  (2).

In order to show that (1) implies (3) an additional argument is needed,
in comparison with \ref{maj}.
Since (1) implies congruence permutability,
we have  $R = R ^\smallsmile $, for
admissible relations, by \cite{W}. 
This implies 
$ \sigma  = \sigma  ^\smallsmile $, for
$U$-admissible relations.
Then we can proceed as usual employing the Pixley term $p$.
If $ a \mathrel \sigma c$
and  
$a \mathrel \tau b \mathrel {\upsilon} c $, then, for example
$a= p(a,c,c) \mathrel {\upsilon ^\smallsmile } p(a,b,c)$.
Since we have just proved that  $\upsilon ^\smallsmile = \upsilon$,
then  $a \mathrel \upsilon   p(a,b,c)$. With similar computations we show
that   $a \mathrel {\sigma \upsilon  } p(a,b,c) \mathrel {\sigma \tau   } c$.

Finally, we show that arithmeticity is equivalent to (4).
We shall use the  fact that arithmeticity is equivalent to the existence of a majority term
together with  congruence permutabilty.
In congruence permutable varieties any reflexive and admissible relation 
is a congruence, hence (4) follows from arithmeticity by
\ref{maj}(2) and the first sentence in Remark \ref{majvsarith}.
Conversely, if (4) holds, 
then taking $R=S=1$ we get
$T= T \circ T$ and this identity implies congruence permutability 
by \cite{W}. Trivially (4) implies    
\ref{maj}(2), hence we get a majority term.
\end{proof}
 
\begin{proposition} \labbel{baker} 
The identity
$\sigma ( \tau \circ \upsilon) \subseteq \sigma  \tau \circ \sigma  \upsilon
 \circ \sigma  \tau \circ \sigma  \upsilon$ holds
for U-admissible relations
 in 
 the variety introduced by 
Baker \cite{B}, the variety generated by 
polynomial  reducts of lattices in which 
the ternary operation $ f(a,b,c) =a \wedge (b \vee c)$ 
is the only fundamental operation.
 \end{proposition}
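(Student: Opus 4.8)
The plan is to adapt the ``move one variable at a time'' arguments of Propositions~\ref{maj} and~\ref{12}, using suitable \emph{directed} J{\'o}nsson terms for Baker's variety $\mathcal V$. The only genuine difficulty is that here $\sigma$ occupies the position of $\Theta$ but is merely $U$-admissible, so every step of the final chain must be witnessed by a \emph{forward} substitution along $\sigma$ (the converse of $\sigma$ is not available), and this is exactly what dictates the choice of terms. Concretely, $\mathcal V$ has the directed J{\'o}nsson terms
\[
d_0(x,y,z)=x,\qquad d_1(x,y,z)=f(x,y,z),\qquad d_2(x,y,z)=f\bigl(z,y,f(x,z,z)\bigr),\qquad d_3(x,y,z)=z .
\]
Rewriting $f(u,v,w)=u\wedge(v\vee w)$, each instance of \eqref{j1},~\eqref{j2},~\eqref{D} needed for these terms collapses to an absorption law (together with commutativity and associativity of $\wedge,\vee$), hence holds in every lattice and so is an identity of $\mathcal V$; in particular $d_1(x,x,z)=x$, $d_1(x,z,z)=d_2(x,x,z)=x\wedge z$, $d_2(x,z,z)=z$ and $d_i(x,y,x)=x$. (In passing this re-proves, through \cite{kkmm}, that $\mathcal V$ is congruence distributive.) All identities used below are of this kind, i.e.\ valid in every lattice, hence hold in every member of $\mathcal V$.

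Next, let $\sigma,\tau,\upsilon$ be $U$-admissible relations on some $\mathbf A\in\mathcal V$ with $a\mathrel\sigma c$ and $a\mathrel\tau b\mathrel\upsilon c$. One fixes reflexive admissible relations $\sigma_g\subseteq\sigma$, $\tau_h\subseteq\tau$, $\upsilon_k\subseteq\upsilon$ containing $(a,c)$, $(a,b)$, $(b,c)$ respectively, and puts $w:=f(a,c,c)\,(=a\wedge c)$. The claim is that
\[
a\ \mathrel{\sigma\tau}\ f(a,b,c)\ \mathrel{\sigma\upsilon}\ w\ \mathrel{\sigma\tau}\ f(c,b,w)\ \mathrel{\sigma\upsilon}\ c ,
\]
the three intermediate entries being $d_1(a,b,c)$, $d_1(a,c,c)=d_2(a,a,c)$ and $d_2(a,b,c)$. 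Membership of each link in $\tau$, resp.\ $\upsilon$, is obtained exactly as in the proof of Proposition~\ref{12}, moving only the middle argument: for instance $a=f(a,a,c)\mathrel{\tau_h}f(a,b,c)$; $f(a,b,c)\mathrel{\upsilon_k}w$; $w=f(c,a,w)\mathrel{\tau_h}f(c,b,w)$, using $c\wedge(a\vee(a\wedge c))=a\wedge c$; and $f(c,b,w)\mathrel{\upsilon_k}f(c,c,w)=c$.

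The point is that each link also lies in $\sigma$, and this has to be witnessed by replacing some occurrences of $a$ by $c$ inside a term, using $(a,c)\in\sigma_g$ and compatibility. The outer links are immediate: $a=f(a,b,a)\mathrel{\sigma_g}f(a,b,c)$ (third argument), and $f(c,b,w)=f\bigl(c,b,f(a,c,c)\bigr)\mathrel{\sigma_g}f\bigl(c,b,f(c,c,c)\bigr)=f(c,b,c)=c$ (innermost argument). For the second link one writes $f(a,b,c)=f\bigl(f(a,a,c),b,c\bigr)$ and moves the \emph{inner} middle $a$, obtaining $f\bigl(f(a,c,c),b,c\bigr)=f(a\wedge c,b,c)=a\wedge c=w$ since $a\wedge c\le c\le b\vee c$. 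The crux is the third link: one writes $w=f\bigl(f(a,c,c),b,f(a,c,c)\bigr)$ (legitimate because $f(u,y,u)=u$) and replaces only the \emph{first} copy of $a$ by $c$, getting $f\bigl(f(c,c,c),b,f(a,c,c)\bigr)=f(c,b,w)$. I expect this last step to be the only real obstacle: with the naive choice $d_2(x,y,z)=f(z,y,x)$ one would land on $f(c,b,a)=c\wedge(a\vee b)$ rather than $f(c,b,w)$, and no single forward $\sigma$-substitution carries $a\wedge c$ to $c\wedge(a\vee b)$ in a way valid in all lattices (the relevant identity already fails in the $\{f\}$-reduct of $M_3$); the virtue of the term $f\bigl(z,y,f(x,z,z)\bigr)$ is exactly that the only occurrence of the moving generator in $d_2(a,b,c)$ is buried inside a copy of $w=a\wedge c$, so that the single pair $(a,c)\in\sigma_g$ does the job. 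Concatenating the four links gives $(a,c)\in\sigma\tau\circ\sigma\upsilon\circ\sigma\tau\circ\sigma\upsilon$, which is what we wanted.
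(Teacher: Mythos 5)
Your proof is correct and follows essentially the same route as the paper: exhibit three explicit intermediate elements and verify each of the four links by a substitution that moves a single occurrence of a generator along $\sigma_g$, $\tau_h$ or $\upsilon_k$; the only difference is your third witness $f(c,b,a\wedge c)$, where the paper uses $f(c,b,a)$. One caveat: your parenthetical claim that no single forward $\sigma$-substitution valid in all lattices carries $a\wedge c$ to $f(c,b,a)=c\wedge(b\vee a)$ is false. The polynomial $t(x)=f\bigl(f(x,b,a),f(c,b,a),c\bigr)$ satisfies $t(a)=f\bigl(a,f(c,b,a),c\bigr)=a\wedge\bigl(f(c,b,a)\vee c\bigr)=a\wedge c$ and $t(c)=f(c,b,a)\wedge\bigl(f(c,b,a)\vee c\bigr)=f(c,b,a)$ as lattice identities, with only the first argument of the inner $f$ moved from $a$ to $c$; this is exactly (the mirror image of) the substitution the paper uses for the link $f(a,b,c)\mathrel{\sigma}a\wedge c$. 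So your detour through $w=a\wedge c$ in the third witness, while perfectly sound, is not forced by any obstruction.
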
 

 \begin{proof}
If, as usual,  
$(a,c) \in \sigma ( \tau \circ \upsilon)$
with
$a \mathrel \tau b \mathrel {\upsilon} c $, then
 the elements 
$f(a,b,c)$, $a \wedge c = f(a,c,c)$ and $f(c,b,a)$
witness    
$(a,c) \in  \sigma  \tau \circ \sigma  \upsilon
 \circ \sigma  \tau \circ \sigma  \upsilon $.
 For example, 
\begin{equation*}
f(a,b,c) = f(f(a,b,c),f(a,b,c),a)
\mathrel \sigma 
f(f(c,b,c),f(a,b,c),a) =  a \wedge c. \qedhere
 \end{equation*}    
 \end{proof}

Of course, Proposition \ref{baker}
applies also to any expansion of Baker's variety. 

We now show that equivalences similar to those in 
Propositions \ref{maj} and \ref{arith}  
hold in a more general context.
In particular, we shall prove
the quite surprising fact  that
the identity
$\sigma( \tau \circ \upsilon) \subseteq \sigma \tau \circ_h \sigma \upsilon  $,
holds for U-admissible relations
if and only if it holds for admissible relations,
where $h$ is the same in both cases.

\begin{theorem} \labbel{gen}
Within a  variety, the following identities are equivalent.
Each   identity
holds in $\mathcal V$ if and only if 
it holds in $\mathbf F _{ \mathcal V } ( 3) $.

 \begin{enumerate}[(1)]   
\item
$\sigma( \tau \circ \upsilon) \subseteq (\sigma \tau \circ \sigma \upsilon) \astt  $,
equivalently, 
$(\sigma( \tau \circ \upsilon)) \astt  = (\sigma \tau \circ \sigma \upsilon) \astt  $;
\item
$\sigma( \tau \circ \tau ) \subseteq (\sigma \tau ) \astt  $,
equivalently, $(\sigma( \tau \circ \tau ) ) \astt =(\sigma \tau ) \astt  $;
\item
 $\sigma \astt  \tau  \astt  = (\sigma \tau ) \astt  $,
equivalently,  $(\sigma \astt  \tau  \astt ) \astt  = (\sigma \tau ) \astt  $.
   \end{enumerate}

In the above identities,
 $\sigma$, $\tau$ and $\upsilon$
are U-admissible relations, equivalently,
 U$_2$-admissible relations.
Limited to item (1),
we can equivalently take them to be
reflexive and admissible relations.  
 \end{theorem}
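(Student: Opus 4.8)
The plan is to reduce the whole statement to one elementary implication about $\mathbf F_{\mathcal V}(3)$, keeping the rest of the argument synthetic and valid in a single fixed algebra. First I would dispose of the ``equivalently'' clauses. Since $\sigma,\tau,\upsilon$ are reflexive and transitive closure is a monotone, idempotent closure operator, the inclusion $\sigma(\tau\circ\upsilon)\subseteq(\sigma\tau\circ\sigma\upsilon)\astt$ is equivalent to equality of the two closures: applying $\astt$ gives one inclusion, and since $\sigma\tau\subseteq\sigma(\tau\circ\upsilon)$ (reflexivity of $\upsilon$) and $\sigma\upsilon\subseteq\sigma(\tau\circ\upsilon)$ (reflexivity of $\tau$) one gets $\sigma\tau\circ\sigma\upsilon\subseteq(\sigma(\tau\circ\upsilon))\astt$, hence the other. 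The same remark settles (2); for (3) one also uses that an intersection of transitive relations is transitive, so $(\sigma\astt\tau\astt)\astt=\sigma\astt\tau\astt$. I would also record the trivial half of the comparison of relation classes: for each of (1)--(3) the U-admissible form implies the U$_2$-admissible form, and, for (1), also the reflexive-admissible form, because these classes of relations are nested.

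Next I would establish, in a fixed algebra, the cycle $(1)\Rightarrow(2)\Rightarrow(3)\Rightarrow(1)$ for U-admissible relations. Both $(1)\Rightarrow(2)$ (put $\upsilon=\tau$) and $(3)\Rightarrow(2)$ (note $\sigma(\tau\circ\tau)\subseteq\sigma\astt\tau\astt$) are immediate. For $(3)\Rightarrow(1)$ I would pass through the U-admissible relation $\tau\cup\upsilon$, getting $\sigma(\tau\circ\upsilon)\subseteq\sigma\astt\cap(\tau\cup\upsilon)\astt=(\sigma\cap(\tau\cup\upsilon))\astt=(\sigma\tau\cup\sigma\upsilon)\astt=(\sigma\tau\circ\sigma\upsilon)\astt$, the first equality being (3) and the last one the fact that $A\cup B$ and $A\circ B$ have the same transitive closure whenever $A,B$ are reflexive. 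The substantial step is $(2)\Rightarrow(3)$, obtained by iterating (2). The class of U-admissible relations is closed under relational composition (if $\sigma=\bigcup_g\sigma_g$ and $\tau=\bigcup_f\tau_f$ then $\sigma\circ\tau=\bigcup_{g,f}\sigma_g\circ\tau_f$) and under arbitrary unions, so $\tau\alt m$ and $\tau\astt$ are again U-admissible. An induction on $k$, applying (2) with $\tau\alt{2^{k}}$ in place of $\tau$, gives $\sigma\cap\tau\alt{2^{k}}\subseteq(\sigma\cap\tau)\astt$ for every U-admissible $\sigma$, hence $\sigma\cap\tau\astt\subseteq(\sigma\cap\tau)\astt$ since $\tau$ is reflexive; using this once more with $\tau\astt$ in the role of $\sigma$, and idempotence of $\astt$, yields $\sigma\astt\cap\tau\astt\subseteq(\sigma\cap\tau)\astt$, and the reverse inclusion is trivial, so (3) holds.

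Then I would bridge the relation classes by a ``splitting'' argument. If $\sigma=\bigcup_g\sigma_g$, $\tau=\bigcup_f\tau_f$, $\upsilon=\bigcup_e\upsilon_e$ with all components reflexive and admissible, any pair in $\sigma(\tau\circ\upsilon)$ already lies in $\sigma_g(\tau_f\circ\upsilon_e)$ for suitable $g,f,e$; applying the reflexive-admissible form of (1) to these components and then enlarging each factor via $\sigma_g\cap\tau_f\subseteq\sigma\cap\tau$ and $\sigma_g\cap\upsilon_e\subseteq\sigma\cap\upsilon$ yields the U-admissible form of (1). Thus all three forms of (1) coincide, and by the cycle above they also coincide with the U-admissible forms of (2) and (3). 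The same splitting device derives the U-admissible form of (2) from its U$_2$-admissible form — here one groups the two middle components of a pair in $\sigma(\tau\circ\tau)$ into the single U$_2$-admissible relation $\tau_{f_1}\cup\tau_{f_2}$ — and then the U$_2$-admissible form of (3) implies the U$_2$-admissible form of (2) by $(3)\Rightarrow(2)$, hence the U-admissible form of (2), hence, by the cycle, the U-admissible form of (3). So all seven forms of (1)--(3) are equivalent in each fixed algebra.

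Finally, the reduction to $\mathbf F_{\mathcal V}(3)$. One direction is trivial since $\mathbf F_{\mathcal V}(3)\in\mathcal V$. For the converse it suffices, by the equivalences just established (which hold in every algebra, in particular in $\mathbf F_{\mathcal V}(3)$), to show that if the reflexive-admissible form of (1) holds in $\mathbf F_{\mathcal V}(3)$ then it holds in every $\mathbf A\in\mathcal V$. Given reflexive admissible $R,S,T$ on $\mathbf A$ and a pair $(a,c)\in R\cap(S\circ T)$ with $a\mathrel S b\mathrel T c$, let $\varphi\colon\mathbf F_{\mathcal V}(3)\to\mathbf A$ send the three free generators $x,y,z$ to $a,b,c$, and set $\widetilde R=\{(u,v)\mid(\varphi u,\varphi v)\in R\}$, and likewise $\widetilde S,\widetilde T$. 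These are reflexive and admissible on $\mathbf F_{\mathcal V}(3)$, and $(x,z)\in\widetilde R\cap(\widetilde S\circ\widetilde T)$, witnessed by $y$; so the hypothesis provides a finite chain from $x$ to $z$ whose consecutive links lie alternately in $\widetilde R\cap\widetilde S$ and $\widetilde R\cap\widetilde T$, and applying $\varphi$ produces the required chain from $a$ to $c$. The point needing the most care is this last reduction: one must check that pullbacks along $\varphi$ of reflexive admissible (respectively U-admissible, U$_2$-admissible) relations are again of the same kind, and observe that three generators genuinely suffice because every relational composition occurring in (1)--(3) has exactly two factors, hence a single intermediate element — the transitive closures in (2) and (3) cause no trouble precisely because those conditions have already been shown, algebra by algebra, to be equivalent to the two-factor condition (1).
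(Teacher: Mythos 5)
Your proof is correct, and its skeleton matches the paper's where it matters most: the trivial implications, the induction that upgrades (2) to $\sigma\cap\tau\astt\subseteq(\sigma\tau)\astt$ followed by a second application with $\tau\astt$ in the role of $\sigma$ to reach (3), and the union trick ($\tau\cup\upsilon$, resp.\ $S\cup T$) to pass between the one-variable and two-variable forms. You diverge at the two places where the paper invokes term conditions. The paper bridges the reflexive-admissible and U-admissible forms of (1) by routing through the Maltsev characterization of Proposition \ref{vr} (extract terms in $\mathbf F_{\mathcal V}(3)$ from the weak form, then reinterpret the terms for unions), and the reduction to $\mathbf F_{\mathcal V}(3)$ is absorbed into that same proposition. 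You instead use a purely relational ``splitting'' observation --- each of $\sigma,\tau,\upsilon$ occurs exactly once on the left of (1), so a pair in $\sigma(\tau\circ\upsilon)$ already lies in $\sigma_g(\tau_f\circ\upsilon_e)$ for single components, and monotonicity of the right-hand side in each variable finishes --- together with a pullback-of-relations argument for the free-algebra reduction. Both devices are sound; the splitting idea is essentially the monotone special case of the paper's Proposition \ref{mall}, in which no disjunction over expansions is needed, and your version even preserves the parameter $h$ in the fixed-length identities, so it recovers the ``same $h$'' byproduct as well. What the paper's longer route buys is the explicit Maltsev condition of Proposition \ref{vr} itself, which is wanted elsewhere in the paper; as a proof of Theorem \ref{gen} alone, your argument is self-contained and somewhat more elementary.
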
  

In order to prove Theorem  \ref{gen}
we first need a Maltsev characterization of condition (1). 
In the formulae below
 we sometimes use a semicolon in place of a comma
 in order
to improve readability.

\begin{proposition} \labbel{vr}
Let $\sigma$, $\tau$, $\upsilon$
denote U-admissible relations,
equivalently, reflexive and admissible relations.
 A variety $\mathcal V$ satisfies
 $\sigma( \tau \circ \upsilon) \subseteq \sigma \tau \circ_h \sigma \upsilon$
if and only if   $\mathcal V$ 
has 
ternary terms $t_i$, $i=0, \dots, h$, 
and $4$-ary terms $s_i$ and $u_i$,
$i=0, \dots, h-1$,
such that the following identities hold throughout $\mathcal V$.
\begin{align*}
x&=t_0(x,y,z), &
t_h(x,y,z) &=z,
\\
t_{i}(x,y,z) &= u_{i}(x,y,z; x), &  
u_{i}(x,y,z; z) &=  t_{i+1}(x,y,z), &&\text{for } i<h,
\\
t_{i}(x,y,z) &= s_{i}(x,y,z; x), &  
s_{i}(x,y,z; y) &=  t_{i+1}(x,y,z), &&\text{for even  } i<h,
\\
t_{i}(x,y,z) &= s_{i}(x,y,z; y), &
s_{i}(x,y,z; z) &=  t_{i+1}(x,y,z), &&\text{for odd } i<h.
 \end{align*}
All the above conditions hold in $\mathcal V$ if and only if 
they hold in $\mathbf F _{ \mathcal V } ( 3) $.
  \end{proposition}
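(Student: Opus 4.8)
The plan is to establish the equivalence by the standard translation between a Maltsev-type condition and membership of pairs in relations generated by a single pair together with the diagonal, carried out in $\mathbf F =\mathbf F _{ \mathcal V } ( 3) $ with free generators $x,y,z$.

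First I would prove that the terms suffice. Assume the terms $t_i$, $s_i$, $u_i$ and the displayed identities are available, and let $\sigma,\tau,\upsilon$ be U-admissible relations on some $\mathbf A\in\mathcal V$ with $(a,c)\in\sigma(\tau\circ\upsilon)$, say $a \mathrel \tau b \mathrel \upsilon c$. Since each of $\sigma,\tau,\upsilon$ is a union of reflexive admissible relations, I may choose reflexive admissible $\sigma_0\subseteq\sigma$, $\tau_0\subseteq\tau$, $\upsilon_0\subseteq\upsilon$ with $(a,c)\in\sigma_0$, $(a,b)\in\tau_0$, $(b,c)\in\upsilon_0$; this is the only place the U-admissibility is used, and it reduces everything to reflexive admissible relations. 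Putting $w_i=t_i(a,b,c)$ for $0\le i\le h$, the first line of identities gives $w_0=a$ and $w_h=c$. Applying $u_i$ coordinatewise to the pairs $(a,a),(b,b),(c,c),(a,c)$ and using the $u_i$-identities yields $(w_i,w_{i+1})\in\sigma_0$ for every $i<h$; for even $i$, applying $s_i$ to $(a,a),(b,b),(c,c),(a,b)$ yields $(w_i,w_{i+1})\in\tau_0$; for odd $i$, applying $s_i$ to $(a,a),(b,b),(c,c),(b,c)$ yields $(w_i,w_{i+1})\in\upsilon_0$. Hence $(w_i,w_{i+1})\in\sigma\tau$ for $i$ even and $(w_i,w_{i+1})\in\sigma\upsilon$ for $i$ odd, which is exactly a witness for $(a,c)\in\sigma\tau\circ_h\sigma\upsilon$.

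For the converse I would work entirely in $\mathbf F$ and take $\sigma,\tau,\upsilon$ to be the least reflexive admissible relations on $\mathbf F$ containing $(x,z)$, $(x,y)$, $(y,z)$ respectively; these are admissible, hence U-admissible, so the assumed identity applies to them. Since $x \mathrel \tau y \mathrel \upsilon z$ and $x \mathrel \sigma z$, we get $(x,z)\in\sigma(\tau\circ\upsilon)\subseteq\sigma\tau\circ_h\sigma\upsilon$, so there is a chain $x=w_0,\dots,w_h=z$ in $\mathbf F$ with $(w_i,w_{i+1})\in\sigma\tau$ for $i$ even and $(w_i,w_{i+1})\in\sigma\upsilon$ for $i$ odd. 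Writing $w_i=t_i(x,y,z)$ for ternary terms $t_i$ gives $t_0(x,y,z)=x$ and $t_h(x,y,z)=z$. Now $\sigma$, being generated by $(x,z)$ and the diagonal of $\mathbf F$, consists exactly of the pairs of the form $(G(x,y,z,x),G(x,y,z,z))$ with $G$ a $4$-ary term; similarly $\tau$ consists of the pairs $(G(x,y,z,x),G(x,y,z,y))$ and $\upsilon$ of the pairs $(G(x,y,z,y),G(x,y,z,z))$. Reading off a suitable $G$ from $(w_i,w_{i+1})\in\sigma$ gives a term $u_i$ with $u_i(x,y,z;x)=t_i$ and $u_i(x,y,z;z)=t_{i+1}$ as identities of $\mathcal V$; from $(w_i,w_{i+1})\in\tau$ for even $i$ one gets $s_i$ with $s_i(x,y,z;x)=t_i$ and $s_i(x,y,z;y)=t_{i+1}$; from $(w_i,w_{i+1})\in\upsilon$ for odd $i$ one gets $s_i$ with $s_i(x,y,z;y)=t_i$ and $s_i(x,y,z;z)=t_{i+1}$. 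These are precisely the required identities.

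Finally, to obtain the alternative reading in terms of reflexive admissible relations and the reduction to $\mathbf F _{ \mathcal V } ( 3) $: an identity holding for U-admissible relations trivially holds for reflexive admissible ones, the converse argument above used only $\mathbf F _{ \mathcal V } ( 3) $, and the sufficiency argument produces the identity throughout $\mathcal V$; so the three conditions (identity for U-admissible relations, identity for reflexive admissible relations, existence of the terms) are mutually equivalent. Moreover the existence of a term $u_i$ behaving as prescribed is, by the generation description above, the same as the pair $(t_i,t_{i+1})$ belonging to the relation on $\mathbf F _{ \mathcal V } ( 3) $ generated by $(x,z)$, and similarly for the $s_i$; so the whole Maltsev condition is decided by $\mathbf F _{ \mathcal V } ( 3) $. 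I do not expect a genuine obstacle: the only care needed is the bookkeeping — matching the alternation of $\sigma\tau\circ_h\sigma\upsilon$ to the parity of the index $i$, and keeping track, for each of $\sigma$, $\tau$, $\upsilon$, of which of the four arguments of $s_i$ and $u_i$ is the one being moved.
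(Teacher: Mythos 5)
Your proof is correct and follows essentially the same route as the paper's: the sufficiency of the terms via the ``only one argument moves at a time'' computation (with the U-admissibility handled by picking one admissible component of each of $\sigma,\tau,\upsilon$ containing the single relevant pair), and the converse by working in $\mathbf F_{\mathcal V}(3)$ with the least reflexive admissible relations generated by $(x,z)$, $(x,y)$, $(y,z)$ and reading off the $4$-ary terms from the description of those relations as $\{(G(x,y,z;p),G(x,y,z;q))\}$. No gaps.
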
  

\begin{proof}
Suppose that 
 $\sigma( \tau \circ \upsilon) \subseteq (\sigma \tau \circ_h \sigma \upsilon)$
holds in $\mathcal V$, or just in $\mathbf F _{ \mathcal V } ( 3) $,
 in the weak form, that is
for reflexive and admissible relations.
Let  $\mathbf F _{ \mathcal V } ( 3) $
  be generated by the 
elements $x$, $y$ and $z$ and
let $\sigma$, $\tau$, $\upsilon$ be
the smallest reflexive and admissible relations containing,
respectively,      
$(x,z)$, $(x,y)$,  $(y,z)$, thus
$(x,z) \in \sigma (\tau \circ \upsilon)$.
By the assumption,
$(x,z) \in \sigma \tau \circ_h \sigma \upsilon$.
Since we are in the free algebra generated by 
 $x$, $y$, $z$, the above relation is witnessed by 
ternary terms
$t_i$, $i=0, \dots, h$.
The elements 
$t_i(x,y, z)$ of $\mathbf F _{ \mathcal V } (3) $
have to satisfy
$x=t_0(x,y,z) $,  
$t_{i}(x,y,z) \mathrel { \sigma \tau }  t_{i+1}(x,y,z)$, for $i$ even,
$t_{i}(x,y,z) \mathrel { \sigma \upsilon }  t_{i+1}(x,y,z)$, for $i$ odd,
 and
$ t_h(x,y,z) =z$. 
It is easy to see that in $\mathbf F _{ \mathcal V } (3) $
we have  
$ \sigma =
 \{ (u(x,y,z;x),u(x,y,z;z)) \mid u \text{ a $4$-ary term of } \mathcal V \}$. 
Hence the relation 
$t_{i}(x,y,z) \mathrel { \sigma }  t_{i+1}(x,y,z)$
implies the existence of terms $u_i$ satisfying the desired equations.
The equations hold throughout $\mathcal V$ since we are in the free 
algebra generated by $3$ elements and the terms depend on $3$ variables.
Arguing in the same way with 
$\tau$ and $\upsilon$,
we get all the claimed equations. 

Now suppose that the displayed equations hold in $\mathcal V$.
The arguments familiar by now
show that 
 $\sigma( \tau \circ \upsilon) \subseteq (\sigma \tau \circ_h \sigma \upsilon)$
holds in the strong form in which
$\sigma$, $\tau$ and $\upsilon$
are taken to be  U-admissible relations.
Indeed, let 
$\mathbf A \in \mathcal V$,
 $\sigma$, $\tau$ and $\upsilon$
be U-admissible relations of $\mathbf A$
and $(a,c) \in \sigma (\tau \circ \upsilon)$,
thus 
$a \mathrel \tau b \mathrel  \upsilon c$, for some $b$. 
Then we have,
say,
$t_{i}(a,b,c) = u_{i}(a,b,c; a) \mathrel  \sigma   
u_{i}(a,b,c; c)  =  t_{i+1}(a,b,c)$.
By using the other equations in a similar way we get that 
the elements 
 $t_{i}(a,b,c)$, $i=0, \dots , h$
witness  
$(a,c) \in \sigma \tau \circ_h \sigma \upsilon$.

Since the stronger form of 
 $\sigma( \tau \circ \upsilon) \subseteq (\sigma \tau \circ_h \sigma \upsilon)$
 implies the weaker form, we have that 
all the conditions are equivalent.
 \end{proof} 

Of course, from the equations  in Proposition \ref{vr}
the $t_i$'s are determined either by the $u_i$'s or by
the $s_i$'s, hence, in principle, it is not even necessary
to mention the $t_i$'s explicitly, it would be enough to reformulate
the equations as follows.
\begin{gather*} \labbel{gath}    
x=u_0(x,y, z; x) = s_0(x,y, z; x), \quad \quad  u_{h-1}(x,y, z; z)=z, \\   
s_{h-1}(x,y, z; z)=z \text{ if $h$ is even, } \quad
s_{h-1}(x,y, z; y)=z \text{ if $h$ is odd,}  \\ 
 u_i(x,y, z; z) = u_{i+1}(x,y, z; x)
=s_i(x,y, z; y) = s_{i+1}(x,y, z; y) \text{ if  $i<h-1$, $i$  even,} \\
 u_i(x,y, z; z) = u_{i+1}(x,y, z; x)
=s_i(x,y, z; z) = s_{i+1}(x,y, z; x) \text{ if $i<h-1$, $i$ odd}.
 \end{gather*} 

 However, both the statement and the proof 
seem  clearer if we mention explicitly the $t_i$'s.

  \begin{proof}[Proof of Theorem  \ref{gen}] 
Clearly, (1) $\Rightarrow $  (2) and
(3) $\Rightarrow $  (2) either in the weak or in the 
strong case.
If (2) holds for U-admissible relations, then, by an easy induction, we get
 \begin{equation}\labbel{4} \tag{P}      
\sigma \tau  \astt  \subseteq  (\sigma \tau ) \astt. 
\end{equation}
Applying (2) again with $\tau \astt $ in place of 
$\sigma$, we get
\begin{equation*}     
\tau  \astt  ( \sigma \circ \sigma ) \subseteq  ( \tau \astt  \sigma  ) \astt =
( \sigma \tau \astt  ) \astt   \subseteq (\sigma \tau ) {\astt} {\astt}
 = (\sigma \tau ) \astt ,
 \end{equation*}
by  \eqref{4} and idempotence  of $ \astt $.
Then another induction shows 
$ \sigma  \astt \tau  \astt = \tau  \astt  \sigma  \astt  \subseteq (\sigma \tau ) \astt $.
The reverse inclusion is trivial.  Thus we have that (2) and (3) are equivalent
for U-admissible relations.

Since, by Proposition \ref{vr},
the weaker and the stronger forms of (1)
are equivalent, it is enough to prove that the 
version of (2) for U$_2$-admissible relations
implies the version of (1)  for reflexive and admissible relations
(for this in turn implies the stronger form of (1), hence the stronger
form of (2), hence (3) follows also from the version of (2) 
only for U$_2$-admissible relations).  

The proof that (2) for U$_2$-admissible relations
implies  (1)  for reflexive and admissible
 relations is similar to the proof of Lemma \ref{31}.
Indeed, for $R$, $S$, and $T$ reflexive and admissible relations,
 letting $\sigma= S \cup T$ and applying (2),
we have
$R(S \circ T) \subseteq R( \sigma \circ \sigma )
\subseteq (R  \sigma ) \astt  = (RS \cup RT) \astt 
= (RS \circ RT) \astt $.
\end{proof}

\section{A Maltsev-like characterization 
of $\alpha( \sigma \circ \sigma ) \subseteq (\alpha \sigma ) \alt h$} \labbel{malsec}

In this section we present a Maltsev type characterization of 
the identity 
$\alpha( \sigma \circ \sigma ) \subseteq (\alpha \sigma ) \alt h$,
for fixed $h$.
This is somewhat similar to Theorem  \ref{gen} and Proposition  \ref{vr},
but qualitatively different in that, for fixed $h$,  the identity 
turns out to be equivalent to a finite disjunction of  identities
involving reflexive and admissible relations.
In other words, for the reader who knows the terminology,
while the identities in Theorem  \ref{gen}
are described by Proposition \ref{vr}
as Maltsev classes, on the other hand, for fixed $h$,   
we describe 
$\alpha( \sigma \circ \sigma ) \subseteq (\alpha \sigma ) \alt h$ as a 
finite union
of strong Maltsev classes.

\begin{theorem} \labbel{mal}
For every natural number $h>0$
and every  variety $\mathcal V$, the following conditions are equivalent. 
Each condition holds for $\mathcal V$ if and only if 
it holds for $\mathbf F _{ \mathcal V } ( 3 ) $.

(1)
$\mathcal V$ 
 satisfies the identity
$\alpha( \sigma \circ \sigma ) \subseteq (\alpha \sigma ) \alt h$,
where  $\sigma$ 
is a U-admissible relation, equivalently, a
U$_2$-admissible relation.

(2)
There is a function $f: \{0, 1,  \dots, h-1 \} \to \{ 1,2\}  $
such that one (and hence all) of the following equivalent conditions 
hold.
  \begin{enumerate}
    \item[(a)]  
$\mathcal V$ satisfies the identity
$\alpha( R_1 \circ R_2 ) \subseteq \alpha  R _{f(0)} \circ 
\alpha  R _{f(1)} \circ \dots \circ  \alpha  R _{f(h-1)}  $,
where $R_1$ and $R_2$ are reflexive and admissible relations. 
\item[(b)]
 $\mathcal V$ has $4$-ary terms
$s_i$, $i=0, \dots, h-1$  such that the following
 identities hold throughout $\mathcal V$. 
\begin{align*}
x =s_0(x,y,z;w_{f(0)}), \quad s_{h-1}(x,y,z;w'_{f(h-1)}) = z, \quad \text{ and } 
\\
\begin{aligned}[c]
s_{i}(x,y,z;w'_{f(i)})  & = s_{i+1}(x,y,z;w_{f(i+1)}) 
\\
x  & =  s_{i+1}(x,y,x;w'_{f(i)})
\end{aligned}
\quad \text{ for }   i < h-1, 
\end{align*}
 where $w_1$, $w_2$, $w'_1$ and $w'_2$
denote, respectively, the variables
$x$, $y$, $y$ and $z$.
\end{enumerate} 
In the  identities in (1) and (2)(a)
 above we can let $\alpha$ be
equivalently  a variable for  congruences or tolerances.
 \end{theorem}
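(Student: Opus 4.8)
Fix $h$. The plan is to run a routine Maltsev translation inside $\mathbf F_{\mathcal V}(3)$ and then close the cycle: the \emph{weakest} form of (1) --- $\sigma$ a U$_2$-admissible relation and $\alpha$ a congruence --- assumed only in $\mathbf F_{\mathcal V}(3)$, implies that (2) holds for some $f$; (2) implies the \emph{strongest} form of (1) --- $\sigma$ a U-admissible relation and $\alpha$ replaced by an arbitrary tolerance $\Theta$; and the strongest form trivially implies the weakest, since every U$_2$-admissible relation is U-admissible and every congruence is a tolerance. For fixed $f\colon\{0,\dots,h-1\}\to\{1,2\}$ the equivalence of (2)(a) and (2)(b) is obtained exactly as in the proof of Proposition \ref{vr}: working in $\mathbf F_{\mathcal V}(3)$ with generators $x,y,z$ one uses that the least reflexive and admissible relation containing $(x,y)$ is $\{(u(x,y,z;x),u(x,y,z;y))\mid u\text{ a }4\text{-ary term of }\mathcal V\}$, that the least one containing $(y,z)$ is $\{(u(x,y,z;y),u(x,y,z;z))\mid u\text{ a }4\text{-ary term}\}$, and that, for $\alpha=Cg(x,z)$, one has $(p,q)\in\alpha$ iff the identity $p(x,y,x)=q(x,y,x)$ holds in $\mathcal V$. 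Reading off a chain witnessing $(x,z)\in\alpha R_{f(0)}\circ\dots\circ\alpha R_{f(h-1)}$ (with $R_1,R_2$ the two least relations just described) produces the $4$-ary terms $s_i$: the $R_{f(i)}$-step gives $e_i=s_i(x,y,z;w_{f(i)})$ and $e_{i+1}=s_i(x,y,z;w'_{f(i)})$, while the relation $e_i\mathrel\alpha e_{i+1}$ together with $e_0=x$ gives the displayed idempotency-type identities, all of which involve only $x,y,z$ after the substitution for $w$. Conversely the $s_i$ rebuild such a chain, and the usual homomorphism argument (given $\mathbf A\in\mathcal V$, reflexive and admissible $R_1,R_2$, a congruence $\alpha$ and $(a,c)\in\alpha(R_1\circ R_2)$ with $a\mathrel{R_1}b\mathrel{R_2}c$ and $a\mathrel\alpha c$, send $x,y,z\mapsto a,b,c$ and push the chain forward, using that the preimage of a reflexive and admissible relation, resp.\ of a congruence, has the same nature) upgrades (2)(b) to the inclusion (2)(a) throughout $\mathcal V$. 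Since this all takes place in $\mathbf F_{\mathcal V}(3)$, conditions (2)(a) and (2)(b) hold for $\mathcal V$ iff they hold for $\mathbf F_{\mathcal V}(3)$.

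\textbf{(2)(a) $\Rightarrow$ (1), and (1) $\Rightarrow$ (2).} If (2)(a) holds, say for a congruence $\alpha$, write a U-admissible relation as $\sigma=\bigcup_{g\in G}\sigma_g$ with each $\sigma_g$ reflexive and admissible; given $(a,c)\in\alpha(\sigma\circ\sigma)$, choose $b,g_1,g_2$ with $a\mathrel{\sigma_{g_1}}b\mathrel{\sigma_{g_2}}c$, apply (2)(a) with $R_1=\sigma_{g_1}$, $R_2=\sigma_{g_2}$, and note $\alpha R_{f(i)}=\alpha\cap R_{f(i)}\subseteq\alpha\cap\sigma=\alpha\sigma$ for each $i$ (as $R_{f(i)}$ is one of $\sigma_{g_1},\sigma_{g_2}\subseteq\sigma$); hence $(a,c)\in(\alpha\sigma)\alt h$. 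This is the argument of Lemma \ref{31}. Conversely, for (1) $\Rightarrow$ (2) work in $\mathbf F_{\mathcal V}(3)$: let $R_1,R_2$ be the least reflexive and admissible relations containing $(x,y)$, $(y,z)$, put $\sigma=R_1\cup R_2$ (U$_2$-admissible) and $\alpha=Cg(x,z)$. Then $x\mathrel{R_1}y\mathrel{R_2}z$ gives $(x,z)\in\sigma\circ\sigma$ and $(x,z)\in\alpha$, so (1) yields a chain $x=e_0,e_1,\dots,e_h=z$ with $(e_i,e_{i+1})\in\alpha\cap(R_1\cup R_2)$ for every $i$. For each $i$ the pair $(e_i,e_{i+1})$ lies in $R_1$ or in $R_2$; let $f(i)\in\{1,2\}$ record a choice. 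Then $(e_i,e_{i+1})\in\alpha\cap R_{f(i)}$, so $(x,z)\in\alpha R_{f(0)}\circ\dots\circ\alpha R_{f(h-1)}$ in $\mathbf F_{\mathcal V}(3)$; by the previous paragraph this is exactly (2)(a), hence (2)(b), for this $f$. Since the derivation used (1) only in $\mathbf F_{\mathcal V}(3)$, the $\mathbf F_{\mathcal V}(3)$-reduction for (1) follows too.

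\textbf{Upgrading to tolerances.} It remains to deduce from (2)(b) the strongest form of (1) and the tolerance version of (2)(a); by the above it suffices to show that the term identities of (2)(b) force $\Theta(\sigma\circ\sigma)\subseteq(\Theta\sigma)\alt h$ for a tolerance $\Theta$ and a U-admissible $\sigma=\bigcup_g\sigma_g$. Given $(a,c)\in\Theta(\sigma\circ\sigma)$ with $a\mathrel\Theta c$ and $a\mathrel{\sigma_{g_1}}b\mathrel{\sigma_{g_2}}c$, put $e_i:=s_i(a,b,c;w_{f(i)})$ (evaluating $w$ at $a,b,c$). Each step $e_i\to e_{i+1}=s_i(a,b,c;w'_{f(i)})$ changes a single argument --- the move $a\to b$ if $f(i)=1$ and $b\to c$ if $f(i)=2$ --- so, exactly as in the proof of Proposition \ref{12}, $(e_i,e_{i+1})$ lies in the \emph{single} reflexive and admissible relation $\sigma_{g_{f(i)}}$, hence in $\sigma$; and $(e_i,e_{i+1})\in\Theta$ follows from $a\mathrel\Theta c$ by the substitution trick of Cz\'edli and Horv\'ath \cite{CH}, the $\alpha$-conditions $x=s_{i+1}(x,y,x;w'_{f(i)})$ of (2)(b) supplying precisely the idempotency identities $s_i(x,y,x;\cdot)=x$ that the trick needs (the analogue of $j_i(x,y,x)=x$ used there). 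Thus $(a,c)\in(\Theta\sigma)\alt h$. The same computation with $\Theta$ a congruence recovers (2)(a) for congruences, closing the equivalence $(2)(a)\Leftrightarrow(2)(b)$, and the union argument above then also gives the weakest form of (1); together with the implications already established, all the stated versions become equivalent, and each holds for $\mathcal V$ iff it holds for $\mathbf F_{\mathcal V}(3)$.

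\textbf{Main obstacle.} The only genuinely non-mechanical point is this last upgrade: one cannot pass from the congruence version of (2)(a) to its tolerance version by a bare homomorphism transfer, because the preimage of a tolerance is merely a tolerance and need not contain the principal congruence $Cg(x,z)$ used in the derivation; the term identities of (2)(b) have to be invoked directly, and the delicate check is that their $\alpha$-conditions are exactly strong enough to drive the Cz\'edli--Horv\'ath collapsing argument. Everything else is a direct adaptation of the proofs of Lemma \ref{31}, Proposition \ref{12} and Proposition \ref{vr}.
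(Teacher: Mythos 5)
Your proposal is correct and follows essentially the same route as the paper: the Maltsev translation in $\mathbf F_{\mathcal V}(3)$ with $\alpha=\cg(x,z)$ and $\sigma=R_1\cup R_2$ to get (2)(b) from the weak form of (1), the standard equivalence of (2)(a) and (2)(b), the union decomposition $a\mathrel{\sigma_{g_1}}b\mathrel{\sigma_{g_2}}c$ to recover the strong form of (1), and the Cz\'edli--Horv\'ath substitution trick for the tolerance version. The only difference is that you spell out the tolerance upgrade (via the identities $t_j(x,y,x)=x$ encoded in (2)(b)) in more detail than the paper, which merely defers to the argument recalled in Proposition \ref{12} and treats the congruence case.
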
 

 \begin{proof}
That in each case we get an equivalent condition 
letting $\alpha$ be a  congruence or tolerance
is proved using the argument from \cite{CH}
recalled at the end of the proof of  Proposition \ref{12}.
We shall deal here with the simpler case when $\alpha$ 
is a congruence.

Suppose that $\mathcal V$ satisfies the identity
$\alpha( \sigma \circ \sigma ) \subseteq (\alpha \sigma ) \alt h$,
where  $\sigma$ 
is a
U$_2$-admissible relation.
Let us work in the free algebra $\mathbf F _{ \mathcal V } (3) $
in $\mathcal V$ generated by the three elements 
$x$, $y$ and  $z$.
Let $\alpha$
be the congruence generated by 
$(x,z)$
and let
 $R_1$ and $R_2$ be, respectively,
the smallest reflexive and admissible relations containing
 $(x,y)$, respectively, $(y,z)$.
Let $\sigma= R_1 \cup R_2 $.   
We have that 
$(x,z) \in \alpha ( \sigma \circ \sigma )$,
hence, by (1),
 $(x,z) \in (\alpha  \sigma )  \alt h$.
  Hence there are ternary terms 
$t_0, t_1, \dots, t_h$ 
such that 
$x=t_0(x,y,z)$,
$t_i(x,y,z) \mathrel { \alpha  \sigma  } t_{i+1}(x,y,z)$,
for   $i=0, \dots, h-1$ 
and $t_h(x,y,z) =z$.
The usual arguments 
show that the identities 
$t_i(x,y,x)=x$ 
should hold throughout $\mathcal V$.
Moreover, 
since $\sigma= R_1 \cup R_2 $, then, for every 
$i= 0, \dots, h-1$,
either $t_i(x,y,z) \mathrel { R_1 } t_{i+1}(x,y,z)$ 
or
$t_i(x,y,z) \mathrel { R_2 } t_{i+1}(x,y,z)$.
Let $f$ be such that  
$t_i(x,y,z) \mathrel { R_{f(i)}} t_{i+1}(x,y,z)$.
Arguing as in the proof of 
Proposition \ref{vr}, 
there are  
$4$-ary terms 
$s_i$ such that 
$t_i(x,y,z) =s_i(x,y,z;w_{f(i)})$
and 
 $s_{i}(x,y,z;w'_{f(i)}) = t_{i+1}(x,y,z) $,
for $i=0, \dots, h-1$
and where $w_1$, $w_2$, $w'_1$ and $w'_2$
denote, respectively, the variables
$x$, $y$, $y$ and $z$.   
Eliminating the $t_i$'s from the equations we have obtained,
we get exactly the identities
in (2)(b).

We have proved that if (1) holds
for U$_2$-admissible relations, then there exists some function $f$
such that (2)(b) holds.

It is by now standard to show that, given some function $f$,
(2)(a) and (2)(b) are equivalent.   Obviously,
if (a) holds, then   (a) holds in $\mathbf F _{ \mathcal V } ( 3 ) $;
 moreover,
the argument above can be easily
reformulated in order to show that if (a) holds
in $\mathbf F _{ \mathcal V } ( 3 ) $, then   (b) holds.
On the other hand, the proof that (b) implies (a) is similar to the 
final part of the proof 
of Proposition \ref{vr}.
If $a \mathrel \alpha c $ and
$a \mathrel { R_1} b \mathrel { R_2} c  $,
then
$(a,c) \in \alpha  R _{f(0)} \circ 
\alpha  R _{f(1)} \circ \dots \circ  \alpha  R _{f(h-1)}$ 
is witnessed by the elements 
$s_i(a,b,c;d _{f(i)})$, for $i<h-1$,
where $d_1=b$ and $d_2=c$.  
Indeed, if, say,  $f(i)=f(i+1)=1$, then 
$ s_i(a,b,c;b) = s_{i+1}(a,b,c;a)  \mathrel { R_1} s_{i+1}(a,b,c;b) $ 
and 
$s_i(a,b,c;a) \mathrel \alpha  s_i(a,b,a;a)  = a= s_{i+1}(a,b,a;a) 
\mathrel \alpha  s_{i+1}(a,b,c;a) $.   

It is now easy to show that (2)(a), for some $f$, implies the stronger 
form of (1) for U-admissible relations.
Indeed, if $\sigma= \bigcup_{g \in G} R _g$, 
$a \mathrel \alpha c $ and
$a \mathrel \sigma  b \mathrel \sigma  c  $,
then 
$ a \mathrel { R_{g_1}} b \mathrel { R _{g_2}} c $,
for some $g_1, g_2 \in G$, 
hence (2)(a) implies
$(a,c) \in \alpha  R _{g_{f(0)}} \circ 
\alpha  R _{g_{f(1)}} \circ \dots \circ  \alpha  R _{g_{f(h-1)}}
\subseteq (\alpha \sigma ) \alt h $.   
 \end{proof}

\begin{remark} \labbel{a}    
Let (a)$_f$ denote the identity given by  (a) in Theorem \ref{mal}
when applied to some specific function $f$.
It is probably an interesting problem to determine which implications
hold among  identities of the form 
(a)$_f$ and (a)$_{f'}$,
letting $f$ and $f'$ vary,
possibly with $h \neq h'$.  
 Of course, if 
$k$ is the number of
``variations''   
in the sequence $f(0), f(1), \dots, f(h-1)$, then,
 letting $R_1$ and $R_2$
be congruences in (a)$_f$,
we obtain 
$\alpha( \beta \circ \gamma ) \subseteq \alpha \beta \circ_{k+1} \alpha \gamma  $
if $f(0)= 1$,
and  
$\alpha( \beta \circ \gamma ) \subseteq \alpha \gamma  \circ_{k+1} \alpha \beta  $
if $f(0)= 2$.
By \cite{kkmm} and the last statement in Proposition \ref{12b},
if one of the above congruence identities holds, then
$\alpha(R \circ S) \subseteq \alpha R \circ _{k'} \alpha S $
holds, for some $k'$, but the known proofs, so far, 
provide a relatively large $k'$.    

As a related observation,
we know that, in general, adjacent identical relations do not absorb
in relation identities.
For example, it follows from equation (10) in \cite{baker} 
that Baker's variety (cf. Proposition \ref{baker})
satisfies 
$T (R \circ S \circ R) \subseteq TR \circ TS \circ TR \circ TR \circ TS \circ TR$
but fails to satisfy 
$T (R \circ S \circ R) \subseteq TR \circ TS \circ TR \circ TS \circ TR$.
Though there are probably similar examples in which the left-hand side has the form
$T (R \circ S)$ or 
$ \Theta  (R \circ S)$, at present we have none of them at hand.
\end{remark}

\begin{problem} \labbel{int}   
As pointed out by several authors, e.g.\ 
J{\'o}nsson \cite[p. 370]{cv} or Tschantz \cite{T},
it  is interesting to study  reflexive and admissible relations 
on some algebra, as well as the identities they satisfy  within a variety.
Here the identities can be constructed using
 the operations of intersection, converse, composition
and transitive closure.
The  results presented in this note suggest that it might be interesting to 
study also identities satisfied by $U$-admissible relations.
Notice that  the set of $U$-admissible relations
on some algebra is closed under the above-mentioned operations, as well as,
obviously, under set-theoretical union and under taking
admissible closure.
Hence it might be  interesting to study 
the structure 
$\mathscr U (\mathbf A)=
( U, \cap, \cup, \circ,  ^\smallsmile ,  \astt ,   \bar{\ } )$
associated to any algebra $\mathbf A$, 
where $U$ is the set of all
U-admissible relations of $\mathbf A$ and 
$\bar{\ }$ denotes the operation of taking the smallest 
reflexive and admissible relation containing the argument. 
\end{problem}

Obviously, the arguments from Propositions \ref{vr} and
Theorem \ref{mal} can be merged in order to obtain a characterization of the 
identity $\sigma( \tau \circ \tau ) \subseteq (\sigma \tau ) \alt h$,
for U-admissible relations.  

More generally, parts of the arguments in the proof of Proposition \ref{mal}
can be inserted into a  broader context, related to Problem \ref{int}. 

If  $\varepsilon( \sigma_1, \sigma _2, \dots ) \subseteq 
\varepsilon' ( \sigma_1, \sigma _2, \dots) $  is an inclusion depending on 
the variables $\sigma_1$, $ \sigma _2$, \dots,
we say that an inclusion
$ \delta ( R_{1,1}, R_{1,2}, \dots, R_{1,j_1}, R_{2,1}, \dots ) \subseteq 
\delta' ( R_{1,1}, R_{1,2}, \dots,  R_{1,j_1}, R_{2,1}, \dots ) $
is an \emph{expansion} of $\varepsilon \subseteq \varepsilon '$ 
if 
$j_1$ is the number of occurrences of $\sigma_1$ in 
$\varepsilon$ and similarly for the other variables, and    
$\delta \subseteq \delta '$ can be obtained in the following way.
First,
$\delta ( R_{1,1}, R_{1,2}, \dots, \allowbreak R_{2,1} \dots ) $ is obtained 
from $\varepsilon( \sigma_1, \sigma _2, \dots )$ by substituting
each occurrence of $\sigma_1$ for a \emph{distinct} variable
from the sequence  
  $R_{1,1}, R_{1,2}, \dots, R_{1,j_1}$ and similarly for 
the other variables.
This first step can be performed essentially in a unique way, 
modulo renamings.
Then we require that 
$\delta' ( R_{1,1}, R_{1,2}, \dots, R_{2,1} \allowbreak  \dots ) $ is obtained 
from $\varepsilon ' ( \sigma_1, \sigma _2, \dots )$ by substituting
every  occurrence of $\sigma_1$ for some variable
from the sequence  
  $R_{1,1}, R_{1,2}, \allowbreak \dots, R_{1,j_1}$, in any order and with the possibility 
of repetitions.
Similarly for 
$\sigma_2$ and for all the other variables.
This second step can be performed in many nonequivalent ways.
For example, for every $f$ as in Condition (2) in Proposition \ref{mal},
 the identity obtained in (2)(a) is one of the many possible
 expansions of the identity from
(1) in Proposition \ref{mal}.

\begin{proposition} \labbel{mall}
Suppose that  $\varepsilon( \sigma_1, \sigma _2, \dots ) \subseteq 
\varepsilon' ( \sigma_1, \sigma _2, \dots) $  is an inclusion
built using the  operations
$\cap$, $\circ$ and $ ^\smallsmile $,
where $\sigma_1$, $ \sigma _2$, \dots \ are intended
to be variables for 
U-admissible relations.
Then a variety $\mathcal V$
satisfies $\varepsilon( \sigma_1, \sigma _2, \dots ) \subseteq 
\varepsilon' ( \sigma_1, \sigma _2, \dots) $ if and only if 
$\mathcal V$ satisfies at least one  expansion
$ \delta ( R_{1,1}, R_{1,2}, \dots, \allowbreak R_{2,1}, \dots ) \subseteq 
\delta' ( R_{1,1}, R_{1,2}, \dots, R_{2,1}, \dots ) $
of $\varepsilon  \subseteq \varepsilon '$,
where  the $R _{i,j} $'s are  interpreted
as  admissible relations.
 \end{proposition}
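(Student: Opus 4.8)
The plan is to establish the biconditional by the two-direction argument already used for Proposition~\ref{vr} and Theorem~\ref{mal}: realize a ``generic'' membership in the left-hand side inside a suitable free algebra, and exploit smallest admissible relations containing a single generator pair. Throughout, the key syntactic input is that $\varepsilon$ and $\varepsilon'$ are built only from $\cap$, $\circ$ and $^\smallsmile$, so that a membership $(a,c)\in\varepsilon(\sigma_1,\sigma_2,\dots)$ unfolds: assign to each subterm an ordered pair of ``endpoints'' (the root getting $(a,c)$), with an intersection node passing its endpoints to both children, a composition node splitting its endpoints $(x,y)$ as $(x,z),(z,y)$ through a fresh $z$, a converse node swapping endpoints, and each leaf $\sigma_i$ with endpoints $(p,q)$ contributing the atomic constraint $(p,q)\in\sigma_i$. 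Conjunction of these leaf constraints is then equivalent to $(a,c)\in\varepsilon$ (and all operations being monotone in every argument will be used repeatedly).

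For the ``if'' direction, suppose $\mathcal V$ satisfies some expansion $\delta\subseteq\delta'$ for admissible relations, let $\sigma_1,\sigma_2,\dots$ be U-admissible relations on $\mathbf A\in\mathcal V$, say $\sigma_i=\bigcup_{g\in G_i}R_{i,g}$ with each $R_{i,g}$ admissible, and take a pair in $\varepsilon(\sigma_1,\sigma_2,\dots)$. Unfolding this membership in $\mathbf A$ produces witnessing elements and, for the $k$-th occurrence of each $\sigma_i$ in $\varepsilon$, an atomic fact $(p,q)\in\sigma_i$; choose $g$ with $(p,q)\in R_{i,g}$ and put this $R_{i,g}$ into the slot of $\delta$ corresponding to that occurrence. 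The same witnesses show the pair lies in $\delta$ evaluated at these admissible relations, hence in $\delta'$ evaluated at them. Since every relation substituted for an occurrence of $\sigma_i$ is contained in $\sigma_i$ and $\cap$, $\circ$, $^\smallsmile$ are monotone, $\delta'$ evaluated at these relations is contained in $\varepsilon'(\sigma_1,\sigma_2,\dots)$; hence the pair lies in $\varepsilon'(\sigma_1,\sigma_2,\dots)$.

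For the ``only if'' direction, unfold $\varepsilon$ generically to obtain the set $W$ of witnessing elements it requires (the distinguished $a,c$, one fresh element per composition node, with intersection nodes imposing the indicated endpoint identifications) and put $n=|W|$. Work in $\mathbf F=\mathbf F_{\mathcal V}(n)$, identifying its free generators $\bar v$ with the elements of $W$; for the $k$-th occurrence of $\sigma_i$ in $\varepsilon$ let $R_{i,k}$ be the smallest admissible relation of $\mathbf F$ containing the generator pair $(v_p,v_q)$ prescribed by that occurrence, and set $\sigma_i:=\bigcup_kR_{i,k}$, a U-admissible relation. By construction all leaf constraints for $\varepsilon$ at endpoints $(a,c)$ hold, so $(a,c)\in\varepsilon(\sigma_1,\sigma_2,\dots)$, whence by hypothesis $(a,c)\in\varepsilon'(\sigma_1,\sigma_2,\dots)$. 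Unfolding this last membership along the syntax of $\varepsilon'$ yields witnessing elements of $\mathbf F$ (i.e.\ $n$-ary terms in $\bar v$) and, for each occurrence of each $\sigma_i$ in $\varepsilon'$, an atomic fact whose pair lies in $\sigma_i=\bigcup_kR_{i,k}$, hence in one particular $R_{i,\kappa}$; recording these indices $\kappa$ selects a single expansion $\delta\subseteq\delta'$ of $\varepsilon\subseteq\varepsilon'$.

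It remains to lift this one instance to all of $\mathcal V$. As in Proposition~\ref{vr}, in $\mathbf F_{\mathcal V}(n)$ one has $R_{i,k}=\{(u(\bar v;v_p),u(\bar v;v_q)):u \text{ an }(n+1)\text{-ary term}\}$, with $(v_p,v_q)$ its defining pair; hence the entire witnessing data for $(a,c)\in\varepsilon'(\sigma_1,\sigma_2,\dots)$ reduces to a finite list of equations between $n$-ary terms valid in $\mathbf F_{\mathcal V}(n)$, and so throughout $\mathcal V$. Given any $\mathbf A\in\mathcal V$, admissible relations placed in the slots of $\delta$, and a pair of $\mathbf A$ in $\delta$ evaluated at them, unfolding produces elements $d_w$ $(w\in W)$ of $\mathbf A$; substituting $d_w$ for $v_w$ in those term-equations and using admissibility of the given relations reconstructs exactly the intermediate elements and leaf memberships placing the pair in $\delta'$ evaluated at them, so $\delta\subseteq\delta'$ holds in $\mathcal V$. (Only finitely many expansions exist, so the proposition presents $\varepsilon\subseteq\varepsilon'$ for U-admissible relations as a finite disjunction of strong Maltsev conditions, in the spirit of Theorem~\ref{mal}.) The conceptual content is entirely that of Proposition~\ref{vr} and Theorem~\ref{mal}; the one genuine difficulty, and the step to carry out most carefully, is formalizing the ``generic witness'' $W$ and the unfolding uniformly for arbitrary $\varepsilon$ and $\varepsilon'$, so that the occurrence-indexing needed to match the definition of expansion is respected in both directions.
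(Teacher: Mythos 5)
Your proof is correct and is exactly the argument the paper intends: the paper omits an explicit proof of Proposition \ref{mall}, saying only that the arguments of Proposition \ref{vr} and Theorem \ref{mal} can be ``inserted into a broader context'', and your write-up --- unfolding $\cap$, $\circ$, $^\smallsmile $ into a primitive positive condition on witnessing elements, realizing the generic instance in $\mathbf F _{ \mathcal V } (n)$ with the smallest admissible relations generated by the prescribed generator pairs to extract one expansion, lifting via term identities, and using monotonicity of the three operations for the converse direction --- is precisely that generalization. No gaps.
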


\acknowledgement{We thank an anonymous referee for many
useful comments which helped improving the paper.
We thank the students of Tor Vergata
University for  stimulating discussions.}

\smallskip

{\scriptsize
The author 
 considers that it is highly  inappropriate, 
and strongly discourages, the use 
of indicators extracted from the following  list 
(even in aggregate forms in combination with similar lists)
 in decisions about individuals 
(job opportunities, career progressions etc.), attributions of funds
 and selections or evaluations of research projects.
\par
}

\end{document}